\numberwithin{equation}{section}
\newcommand{\V}[1]{\boldsymbol{#1}} 
\newcommand{\abs}[1]{\left|#1\right|} 
\renewcommand{\i}{\mathsf i} 
\newcommand{\te}{\text}
\newcommand{\Biggg}{\bBigg@{3.5}}
\newtheorem{theorem}{Theorem}[section] 
\newtheorem{lemma}[theorem]{Lemma} 
\newtheorem{remark}[theorem]{Remark}
\begin{document}

\title{Random Batch Ewald Method for Dielectrically Confined Coulomb Systems}

\author[1,2]{Zecheng Gan\thanks{zechenggan@ust.hk}}
\author[1,3]{Xuanzhao Gao\thanks{xz.gao@connect.ust.hk}}
\author[4]{Jiuyang Liang\thanks{liangjiuyang@sjtu.edu.cn}}
\author[4]{Zhenli Xu\thanks{xuzl@sjtu.edu.cn}}

\affil[1]{Thrust of Advanced Materials, The Hong Kong University of Science and Technology (Guangzhou), Guangdong, China}
\affil[2]{Department of Mathematics, The Hong Kong University of Science and Technology, Clear Water Bay, Kowloon, Hong Kong SAR}
\affil[3]{Department of Physics, The Hong Kong University of Science and Technology, Clear Water Bay, Kowloon, Hong Kong SAR}
\affil[4]{School of Mathematical Sciences, MOE-LSC and CMA-Shanghai, Shanghai Jiao Tong University, Shanghai, China}

\date{}
\maketitle

\begin{abstract}
Quasi two-dimensional Coulomb systems
have drawn widespread interest.
The reduced symmetry of these systems leads to complex collective behaviors,
yet simultaneously poses significant challenges for particle-based simulations.
In this paper, a novel method is presented for efficiently simulate a collection of charges confined in doubly-periodic slabs, with the extension to scenarios involving dielectric jumps at slab boundaries.
Unlike existing methods, the method is insensitive to the aspect ratio of simulation box, and it achieves optimal $\mathcal O(N)$ complexity and strong scalability, thanks to the random batch Ewald (RBE) approach. Moreover, the additional cost for polarization contributions, represented as image reflection series, is reduced to a negligible cost via combining the RBE with an efficient structure factor coefficient re-calibration technique in $\V k$-space.
Explicit formulas for optimal parameter choices of the algorithm are provided through error estimates, together with a rigorous proof.
Finally, we demonstrate the accuracy, efficiency and scalability of our method, called RBE2D, via numerical tests across a variety of prototype systems. 
An excellent agreement between the RBE2D and the PPPM method is observed, with a significant reduction in the computational cost and strong scalability, demonstrating  that it is a promising method for a broad range of charged systems under quasi-2D confinement.

{\bf AMS subject classifications}.  	
82M37; 
65C35;  
65T50;
65Y05 
\end{abstract}

\section{Introduction}

Quasi two-dimensional (quasi-2D) Coulomb systems~\cite{mazars2011long} have caught much attention in many areas of science and engineering. Due to the confinement effect, such systems can exhibit various interesting properties for future nanotechnologies, prototype examples include graphene~\cite{novoselov2004electric}, metal dichalcogenide monolayers~\cite{kumar2012tunable}, and colloidal monolayers~\cite{mangold2003phase}.
Among quasi-2D systems, charged systems under dielectric slab confinement are of particular interest.
The dielectric confinement is relevant to a wide range of systems and phenomena, including water/electrolytes/ionic liquids confined in thin films~\cite{raviv2001fluidity}, ion transportation in nanopores~\cite{zhu2019ion}, and self-assembly of colloidal/polymer monolayers~\cite{kim2017imaging}.
For particle-based simulations of isotropic Coulomb systems, deterministic linear-scaling algorithms have been developed, which usually fall into one of the two categories: fast multipole methods (FMM)~\cite{greengard1987fast,cheng1999fast,ying2004kernel} and fast Fourier transform (FFT) based Ewald-splitting methods~\cite{hockney2021computer,darden1993particle,essmann1995smooth}.
Noteworthy is that a new alternative, namely the random batch Ewald (RBE) method~\cite{jin2021random}, has been proposed recently. The RBE adopts an importance sampling strategy in $\V k$-space, and scales optimally as $\mathcal O(N)$ with reduced variances. The RBE has been successfully applied to large-scale simulations of Coulomb systems under various ensembles~\cite{liang2022superscalability,liang2024JCP}, but so far still limited to fully periodic cases.
For confined quasi-2D Coulomb systems, the joint impact of the long-range Coulomb interaction, sharp dielectric interface conditions, and the reduced symmetry of quasi-2D geometry, poses significant challenges for efficient and accurate particle-based simulations.

Over the past decades, numerous methods have been developed in the literature for quasi-2D Coulomb systems, mostly rely on modifications to the FMM and FFT-based Ewald methods. 
One of the pioneer work is the Ewald2D method~\cite{parry1975electrostatic}, which directly applies the Ewald-splitting technique for quasi-2D systems, it scales as $\mathcal{O}(N^2)$ and is found to be slowly convergent due to the oscillatory nature of the Green's function for quasi-2D geometry.
To further accelerate the convergence of the quasi-2D lattice summation, more methods have been proposed, including the MMM2D method~\cite{arnold2002mmm2d}, the SOEwald2D method~\cite{gan2024fast}, the periodic FMM method~\cite{yan2018flexibly}, and the spectral Ewald method~\cite{lindbo2012fast}.
Instead of exactly solving the quasi-2D problem, alternative approaches are to approximate the quasi-2D summations by 3D periodic summations with correction terms, such as the Yeh--Berkowitz (YB) correction~\cite{yeh1999ewald} and the electric layer correction (ELC)~\cite{arnold2002electrostatics}.
These correction methods effectively reduce the computational cost and are simple to implement, but lacks control of accuracy.

The aforementioned methods face limitations when dealing with sharp dielectric interfaces. To tackle simulations involving dielectrically confined quasi-2D systems, several approaches have emerged in recent years. These approaches address the polarization effect either by introducing image charges for slab interfaces, effectively transforming the system into a homogeneous one (albeit with a considerable increase in thickness along the $z$-direction)~\cite{tyagi2007icmmm2d,tyagi2008electrostatic,dos2015electrolytes,yuan2021particle,liang2020harmonic}, or by numerically solving the Poisson equation with interface conditions~\cite{maxian2021fast, nguyen2019incorporating, ma2021modified}. 
We will not delve into a comprehensive review and comparison of these methods; instead, we will highlight two crucial aspects: 1) they all rely on either FMM or FFT to achieve linear scaling, and 2) the computational cost significantly increases compared to the homogeneous case, especially for \emph{strongly confined} systems (i.e., with a large aspect ratio of simulation box). 
In this scenario, FFT-based methods require a significant increase in the number of grids to accommodate for the extended system thickness (i.e. zero-padding), while FMM-based methods require incorporating more near-field contributions and solving a possibly ill-conditioned linear system~\cite{pei2023fast}. 




In this paper, we propose a novel method, namely the random batch Ewald2D method (RBE2D), for efficient and accurate simulations of quasi-2D charged systems under dielectric confinement.
We first present the reformulation for the quasi-2D Ewald sum into a Ewald3D sum with an ELC term and an infinite boundary correction (IBC) term. Rigorous error analysis is also presented, justifying the optimal parameter choices for a given accuracy.
Then RBE is applied to achieve $\mathcal{O}(N)$ complexity: the stochastic approximation has reduced variances thanks to an importance sampling strategy and coupling with a proper thermostat in MD simulations.
For systems with dielectric mismatch, the subtle dielectric interface problem is reduced to a homogeneous problem via image charge reflection,
we further recalibrate the structure factor coefficients for the image series in $\V k$-space, 
enabling the computation of the polarization contributions with minimal overhead. The main advantage of our method comparing with existing methods is two folds: 1) our method relies on a random batch sampling strategy in $\V k$-space to achieve linear-scaling, thus it is highly efficient and has strong scalability; 2) our method is mesh-free, and can be applied to strongly confined systems with negligible extra cost.
Finally, numerical tests are presented across a variety of prototype systems, including symmetric/asymmetric electrolytes near neutral/non-neutral slabs in an implicit solvent, and all-atom simulations for confined water molecules, validating the accuracy, efficiency, and scalability of the RBE2D method.

The rest of the paper is organized as follows. 
In Section~\ref{sec::RBE2D}, a comprehensive review of the Ewald2D summation is provided, then the RBE2D method is discussed in details.
In Section~\ref{sec::RBE2D_dielectric}, we introduce the fast algorithms for treating the dielectric interfaces, and extend the RBE2D method to systems under dielectric confinement.
In Sections~\ref{sec::numerical} and \ref{sec::numericalDielectric}, the accuracy, efficiency, and scalability of the RBE2D method are demonstrated via various numerical tests. 
Finally, the paper is concluded in Section~\ref{sec::conclusion}.

\section{Random batch Ewald method for quasi-2D Coulomb systems}\label{sec::RBE2D}

In this section, we first provide a comprehensive review of the commonly used summation formulas for quasi-2D Coulomb systems.
Then we discuss its reformulation, and the RBE2D method along with detailed error analysis.

\subsection{Quasi-2D lattice summations}\label{subsec::quasi-2D}

Quasi-2D systems are usually modeled as a rectangular domain $\Omega_{\text{Q2D}}$, with doubly-periodic boundary conditions in~$xy$ and free-space boundary condition in~$z$.
Side lengths of the simulation cell are $L_x$, $L_y$ and $H$ in~$x$, $y$ and $z$, respectively.
The confined particles are modelled as point charges inside $\Omega_{\text{Q2D}}$, with charge~$q_i$ and location~$\V{r}_i$, for~$i\in\{1,\cdots,N\}$. Also see Fig.~\ref{fig:2dIllustration}(a) for a 3D illustration. Typically, we assume $H<L_x=L_y$ due to confinement. 
We first focus on the homogeneous case in this section, and the extension to dielectric confinement case, as illustrated in Fig.~\ref{fig:2dIllustration}(b), will be discussed in Section~\ref{sec::RBE2D_dielectric}.

\begin{figure}[ht]
	\begin{center}
		\includegraphics[width=0.75\textwidth]{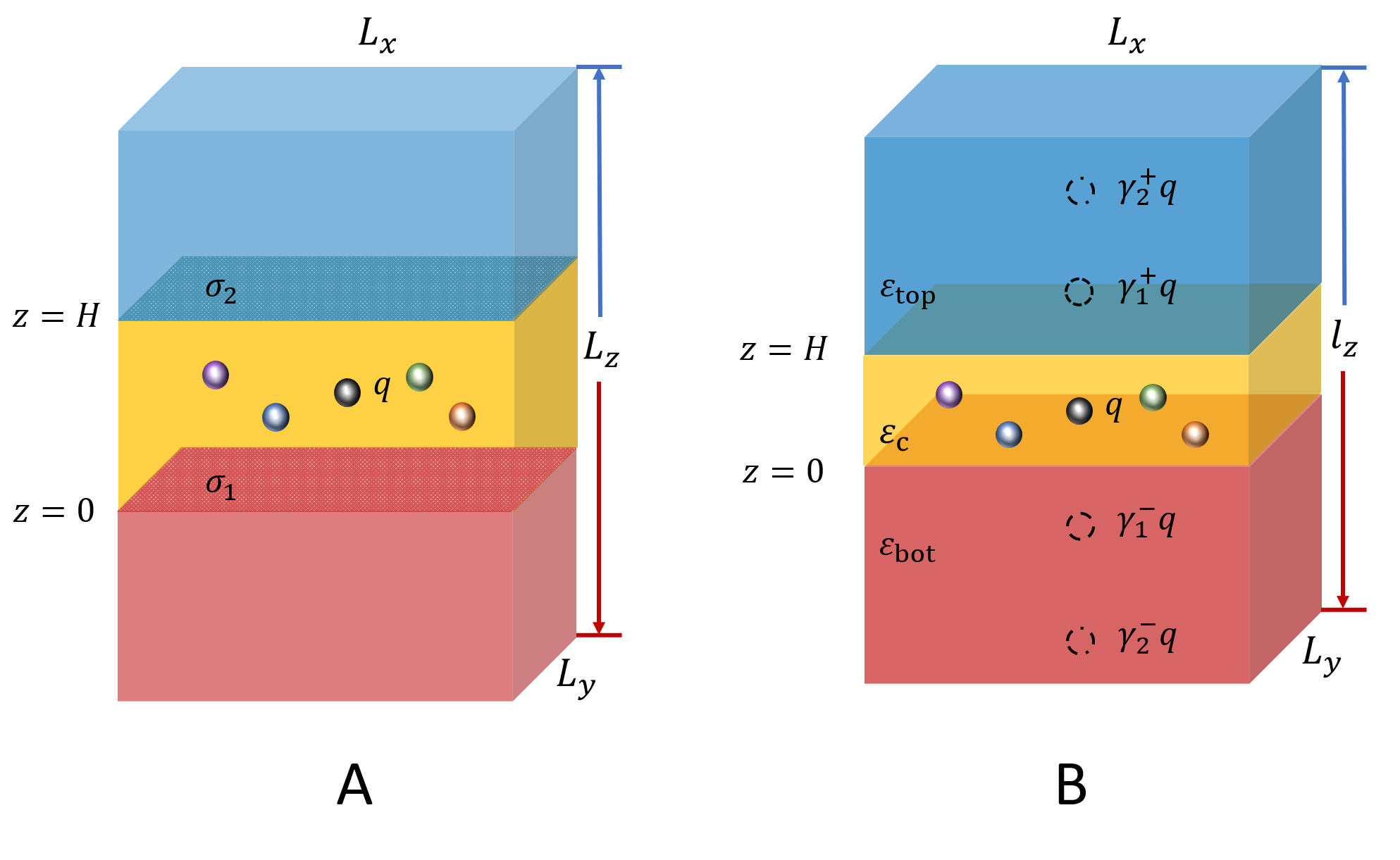}
		\caption{3D illustrations for (a) the quasi-2D system confined by two slabs without dielectric interfaces; and (b)  with two dielectric interfaces. The slabs are located at $z=0$ and $z=H$. 
			In (b), the dashed circles indicate the image charges of $q$ resulting from the reflection between the two interfaces. The magnitude of the $l$th layer of images is altered by factors of $\gamma^{(l)}_{\pm}$.}
		\label{fig:2dIllustration}
	\end{center} 
\end{figure}

The total electrostatic energy $U$ reads:
\begin{equation}\label{eq:2dperiodicU}
	U = \frac{1}{2}\sum_{\V n}{}^\prime\sum_{i, j=1}^N q_iq_j\frac{1}{\abs{\V r_{ij}+ \V \ell}} \;,
\end{equation}
where $\V n=\begin{pmatrix} n_x, n_y\end{pmatrix}$ with $n_x$, $n_y\in\mathbb Z$, and $\V \ell= \begin{pmatrix} n_xL_x, n_yL_y, 0\end{pmatrix}$, i.e., we sum over periodic replicas in $xy$ directions.
It is understood that the singular term when $i=j$ and $\V n=\V 0$ needs to be avoided, which is denoted as $\sum{}^\prime$.
Furthermore, the system must satisfy the total charge neutrality condition
\begin{equation}\label{eq:neutrality}
	\sum_{i=1}^N q_i=0\;,
\end{equation}
so that the electrostatic energy and forces are well-defined.
The difficulty of calculating Eq.~\eqref{eq:2dperiodicU} is two folds, 
1) due to the long-range nature of the Coulomb potential, the lattice sum is slowly convergent, and one can not just truncate the sum at certain finite cutoff distance; 
2) besides the long-range part, the Coulomb potential is also singular as $\V r\to \V 0$, as a result one can not simply deal with it via Fourier transform due to the non-smoothness.

To overcome these difficulties, one can use the so-called ``Ewald-splitting'' technique~\cite{ewald1921berechnung} to decompose the Coulomb kernels into the sum of two components as
\begin{equation}\label{eq:ewald_decomposition}
	\begin{aligned}
		\frac{1}{r}=\frac{\mathrm{erf}(\alpha r)}{r}+\frac{\mathrm{erfc}(\alpha r)}{r} \;
	\end{aligned}
\end{equation}
for any constant $\alpha>0$, where the error function $\mathrm{erf}(\cdot)$ is defined as
\begin{equation}\label{eq:erf}
	\begin{aligned}
		\mathrm{erf}(x):=\frac{2}{\sqrt{\pi}}\int_0^x e^{-u^2}du\;,
	\end{aligned}
\end{equation}
and the error complementary function is $\mathrm{erfc}(x):=1-\mathrm{erf}(x)$.
After splitting, the singular and long-range Coulomb kernel is separated into a singular short-range kernel and a smooth long-range kernel, which can be efficiently handled in real and reciprocal spaces, respectively.
It has been shown that under the charge neutrality condition Eq.~\eqref{eq:neutrality}, the quasi-2D lattice sum $U$ given by Eq.~\eqref{eq:2dperiodicU} can be written as:
\begin{equation}
	U = U_{\text{real}}+U_{\text{Fourier}}\;,
\end{equation}
where
\begin{equation}\label{eq:ewald2d-1}
	U_{\te{real}}=\frac{1}{2}\sum_{\V n}{}^\prime\sum_{i, j=1}^N q_iq_j\frac{\mathrm{erfc}(\alpha\abs{\V r_{ij}+\V \ell })}{\abs{\V r_{ij}+ \V \ell}} \;,
\end{equation}
\begin{equation}\label{eq:ewald2d-2}
	U_{\te{Fourier}}=\frac{\pi}{2L_xL_y}\sum_{i, j=1}^N q_iq_j\sum_{\V h}{}^\prime\frac{e^{\i \V h\cdot \V r_{ij}}}{h}\mathcal{G}_{\alpha}(h,z_{ij})-\frac{\alpha}{\sqrt{\pi}}\sum_{i=1}^{N}q_i^2+\mathcal J_0\;.
\end{equation}
Here the reciprocal lattice vector is defined as $ {\V h}=2\pi\begin{pmatrix} n_x/L_x, n_y/L_y, 0\end{pmatrix}$, $\i=\sqrt{-1}$, and
\begin{equation}
	\mathcal{G}_{\alpha}(h,z_{ij}):=\left[e^{h z_{ij}}\mathrm{erfc}\left(\frac{h}{2\alpha}+\alpha z_{ij}\right)+e^{-h z_{ij}}\mathrm{erfc}\left(\frac{h}{2\alpha}-\alpha z_{ij}\right)\right],
\end{equation}
with the $\V 0$-th mode correction $\mathcal J_0$ reads
\begin{equation}\label{eq:ewald2d-j0}
	\begin{aligned}
		\mathcal J_0 &=\frac{-\pi}{L_xL_y}\sum_{i, j=1}^N q_iq_j\left[z_{ij}\mathrm{erf}\left(\alpha z_{ij}\right)+\frac{1}{\alpha\sqrt{\pi}}e^{-\alpha^2z^2_{ij}}\right]\;.
	\end{aligned}
\end{equation}
Eqs.~\eqref{eq:ewald2d-1}--\eqref{eq:ewald2d-j0} are the well-known exact Ewald2D summation formulas~\cite{parry1975electrostatic,zhonghanhu2014JCTC}. 
Unlike the 3D-periodic case where Ewald sum is conditionally convergent, the Ewald2D sum is absolutely convergent.
However, direct computation of~\eqref{eq:ewald2d-1}--\eqref{eq:ewald2d-j0} is quite involved and takes $\mathcal O(N^2)$ complexity, much worse than $\mathcal O(N^{1.5})$ for Ewald3D summations, limiting its application to large-scale simulations.

\subsection{Reformulation for Ewald2D summation}\label{subsec::IBCELC}

In this section, we present a reformulation for the Ewald2D sum,
which reduces the Ewald2D sum (Eqs.~\eqref{eq:ewald2d-1}--\eqref{eq:ewald2d-j0}) to a simpler Ewald3D sum complemented by an infinite boundary correction (IBC) and electrostatic layer corrections (ELC)~\cite{pan2014rigorous}. Based on which efficient algorithms can be further developed.

We first introduce the following identity~\cite{erdelyi1954tables}
\begin{equation}\label{eq:int-identity1}
	\begin{aligned}
		I(\omega, \nu)&=\frac{\pi}{2\omega}\left[e^{\omega\nu}\mathrm{erfc}\left(\omega+\frac{\nu}{2}\right)+e^{-\omega\nu}\mathrm{erfc}\left(\omega-\frac{\nu}{2}\right)\right]=e^{-\omega^2}\int_{-\infty}^{\infty}\frac{e^{-t^2}}{\omega^2+t^2}e^{\i t\nu}dt\;.
	\end{aligned}
\end{equation}
Taking the limit $\omega\to 0$ and removing the divergence, one has the following identity
\begin{equation}\label{eq:int-identity2}
	\begin{aligned}
		I_0(\nu)&=-\pi\left[\nu\mathrm{erf}\left(\frac{\nu}{2}\right)+\frac{2e^{-\frac{\nu^2}{4}}}{\sqrt{\pi}}\right]=\int_{-\infty}^{\infty}\frac{e^{-t^2}e^{\i t\nu}-1}{t^2}dt\;.
	\end{aligned}
\end{equation}
Applying Eqs.~\eqref{eq:int-identity1} and~\eqref{eq:int-identity2} to Eqs.~\eqref{eq:ewald2d-2} and~\eqref{eq:ewald2d-j0} by taking $\omega=h/2\alpha$ and $\nu=2\alpha z_{ij}$, one obtains the alternative expressions in integral forms:
\begin{equation}\label{eq:ewald2d-intform1}
	\begin{aligned}
		U_{\te{Fourier}}&=\frac{1}{2\alpha L_xL_y}
		\sum_{i,j=1}^{N}q_iq_j\sum_{\V h}{}^\prime e^{\i \V h\cdot \V r_{ij}}\int_{-\infty}^{\infty}\frac{e^{-\frac{h^2}{4\alpha^2}-t^2}}{\frac{h^2}{4\alpha^2}+t^2}e^{2\i \alpha z_{ij}t}dt-\frac{\alpha}{\sqrt{\pi}}\sum_{i=1}^{N}q_i^2+\mathcal{J}_0,
	\end{aligned}
\end{equation}
with
\begin{equation}\label{eq::J02}
	\mathcal{J}_0=\frac{1}{2\alpha L_xL_y}\sum_{i,j=1}^{N}q_iq_j\int_{-\infty}^{\infty}\frac{e^{-t^2}e^{2\i \alpha z_{ij}t}-1}{t^2}dt.
\end{equation}
Due to the smoothness and exponential decay of the integrands, discretizing the integrals via trapezoidal rule yields spectral convergence~\cite{trefethen2014Rev}. The resulting discretized formulation along with error estimates, are summarized in Theorem~\ref{thm::err}.

\begin{theorem}\label{thm::err}
	\emph{\textbf{(Ewald2D reformulation and its error estimate)}} Assume $L_z\geq H$, discretizing the integrals in Eq.~\eqref{eq:ewald2d-intform1} via trapezoidal rule with mesh size $\pi/\alpha L_z$, one will have 
	\begin{equation}\label{eq:ewald2d-ELC}
		\begin{aligned}
			U_{\emph{Fourier}}&=\frac{2\pi}{L_xL_yL_z}\sum_{\V k}{}^\prime\frac{e^{-\frac{k^2}{4\alpha^2}}}{k^2}\abs{\sum_{i=1}^N q_i e^{\i \V k\cdot \V r_i}}^2 -\frac{\alpha}{\sqrt{\pi}}\sum_{i=1}^{N}q_i^2+U_{\emph{IBC}}+U_{\emph{ELC}}+U_{\emph{err}}\;,
		\end{aligned}
	\end{equation}
	where $\bm{k}=2\pi(n_x/L_x,n_y/L_y,n_z/L_z)$, the first two terms on the RHS are in the form of Ewald3D summation; 
	and
	\begin{equation}
		U_{\emph{IBC}}:=\frac{2\pi}{L_xL_yL_z}\left(\sum_{i=1}^{N} q_iz_i\right)^2\quad\text{~}\quad U_{\emph{ELC}}:=\frac{2\pi}{L_xL_y}\sum_{i,j}q_iq_j\sum_{\V h}{}^\prime\left[\frac{e^{\i \V h\cdot \V r_{ij}}}{h}\frac{\cosh(hz_{ij})}{1-e^{hL_z}}\right]
	\end{equation}
	are the IBC~\cite{yeh1999ewald} and ELC~\cite{arnold2002electrostatics} terms found earlier separately, and $U_{\emph{err}}$ is the remainder error term dueto trapezoidal discretization. The last two terms decay as
	\begin{equation}\label{eq::ELCerr}
		|U_{\emph{ELC}}|\sim \mathcal{O}\left(e^{-\frac{2\pi(L_z-H)}{\max\{L_x,L_y\}}}\right)\quad\text{and}\quad |U_{\emph{err}}|\sim \mathcal{O}\left(e^{-\alpha^2 (L_z-H)^2}\right),
	\end{equation}
	respectively.
\end{theorem}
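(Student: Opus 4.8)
The plan is to start from the integral representation \eqref{eq:ewald2d-intform1}--\eqref{eq::J02} and apply the trapezoidal rule with node spacing $\Delta t = \pi/(\alpha L_z)$ to the $t$-integrals, then identify the resulting discrete sums with the Ewald3D sum plus the IBC and ELC terms, with everything else collected into $U_{\text{err}}$. The key observation is that the trapezoidal nodes $t_m = m\pi/(\alpha L_z)$ turn the factor $e^{2\i\alpha z_{ij}t_m} = e^{\i (2\pi m/L_z) z_{ij}}$ into exactly the $z$-component of a 3D reciprocal lattice vector $k_z = 2\pi m/L_z$, so that together with the in-plane modes $\V h$ we recover the full 3D $\V k$-lattice. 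First I would substitute the nodes into the integrand of the $U_{\text{Fourier}}$ sum and reorganize: the $m\neq 0$ terms combine with $h = |\V h| \neq 0$ to give $\sum_{\V k}' \frac{e^{-k^2/4\alpha^2}}{k^2} |\hat\rho(\V k)|^2$ up to the correct prefactor (using $k^2 = h^2 + k_z^2$), which is the claimed Ewald3D reciprocal sum; the $m=0$ term of $U_{\text{Fourier}}$ (still $h\neq 0$) gives $\frac{2\pi}{L_xL_y}\sum_{i,j}q_iq_j\sum_{\V h}' \frac{e^{\i\V h\cdot\V r_{ij}}}{h}\frac{\cosh(h z_{ij})}{1-e^{hL_z}}$ after summing the geometric-type series that appears, which one recognizes as $U_{\text{ELC}}$; and the $\mathcal J_0$ term contributes, via the $\omega\to 0$ limit identity \eqref{eq:int-identity2}, the $m=0$ piece that assembles into $U_{\text{IBC}} = \frac{2\pi}{L_xL_yL_z}(\sum_i q_i z_i)^2$ (here charge neutrality \eqref{eq:neutrality} is used to drop the cross terms and leave a perfect square).

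Next I would control the two error contributions in \eqref{eq::ELCerr}. For $U_{\text{ELC}}$: since $|z_{ij}|\leq H$ and $\cosh(h z_{ij})/(1-e^{hL_z})$ behaves like $-e^{-h(L_z-|z_{ij}|)}$ for large $h$, the worst mode is the smallest nonzero $h$, namely $h_{\min} = 2\pi/\max\{L_x,L_y\}$, giving the stated $\mathcal O(e^{-2\pi(L_z-H)/\max\{L_x,L_y\}})$ bound after summing the rapidly converging tail over $\V h$. For $U_{\text{err}}$: this is the standard trapezoidal-rule error for an analytic, exponentially decaying integrand. The integrand in \eqref{eq:ewald2d-intform1}, as a function of $t$, is $e^{-t^2}/(\tfrac{h^2}{4\alpha^2}+t^2)$ times an oscillatory exponential $e^{2\i\alpha z_{ij}t}$; extending into the complex $t$-plane, the factor $e^{2\i\alpha z_{ij} t}$ combined with the Gaussian gives, by the Poisson-summation form of the trapezoidal error, a remainder governed by the aliased terms at shifted nodes, i.e. $\sum_{m\neq 0}$ of the Fourier transform evaluated at $2\alpha z_{ij} + 2\alpha L_z m$, which decays like $e^{-\alpha^2(L_z - |z_{ij}|)^2} \leq e^{-\alpha^2(L_z-H)^2}$; the $1/(\tfrac{h^2}{4\alpha^2}+t^2)$ factor only contributes polynomial corrections and does not affect the exponential rate. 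One should be a little careful that the $\mathcal J_0$ integrand has the $(e^{-t^2}e^{\i t\nu}-1)/t^2$ form with a removable singularity at $t=0$; I would handle its trapezoidal error by the same Poisson-summation argument applied to the regularized integrand, noting the $-1$ term integrates to a $\nu$-independent constant that contributes only to $m=0$.

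The main obstacle I anticipate is the bookkeeping in the $m=0$ (in-plane) sector: one must carefully combine the $h\neq 0$, $k_z = 0$ slice of the would-be Ewald3D sum with the $\mathcal J_0$ contribution and the trapezoidal endpoint/weight conventions so that exactly $U_{\text{IBC}} + U_{\text{ELC}}$ emerges and no spurious self-energy or constant is left over — in particular the geometric series $\sum_{m} e^{-hL_z|m|}$-type manipulation that produces the $1/(1-e^{hL_z})$ in $U_{\text{ELC}}$ must be matched against what the trapezoidal rule actually outputs, and the divergence at $\omega = h/2\alpha \to 0$ must be cancelled against the $\mathcal J_0$ divergence using \eqref{eq:int-identity1}--\eqref{eq:int-identity2} before taking $h\to 0$. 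A secondary technical point is justifying interchange of the (absolutely convergent) lattice sum over $\V h$ with the trapezoidal summation and with the complex-plane contour shift used for the error estimate; this is routine given the Gaussian decay but should be stated. Once the algebra is organized, \eqref{eq:ewald2d-ELC} and the decay rates \eqref{eq::ELCerr} follow directly.
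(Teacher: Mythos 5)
Your overall strategy coincides with the paper's: trapezoidal discretization of the $t$-integrals with mesh $\pi/(\alpha L_z)$, identification of the nodes $t_m$ with the $k_z=2\pi m/L_z$ modes so that the discrete sum reassembles the Ewald3D reciprocal sum, and extraction of $U_{\text{IBC}}$ from the $h=0$, $m=0$ term using charge neutrality. That part is sound. However, there is a genuine gap in how you account for the ELC term and, correlatively, in your estimate of $U_{\text{err}}$. You assign $U_{\text{ELC}}$ to ``the $m=0$ term of $U_{\text{Fourier}}$ \ldots after summing the geometric-type series that appears,'' but the $m=0$ node of the trapezoidal sum is simply the $k_z=0$ slice of the Ewald3D sum and contains no geometric series; the ELC term is not part of the discrete sum at all. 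It is part of the \emph{quadrature error}: the integrand $e^{-(a^2+t^2)}e^{\i bt}/(a^2+t^2)$ with $a=h/(2\alpha)$ has simple poles at $t=\pm\i a$, and (whether one argues by Poisson summation or, as in Appendix~\ref{app::trapezoidal}, by contour integration) these poles contribute a residue term $\frac{\pi}{a}\,\frac{e^{ab}+e^{-ab}}{1-e^{2\pi a/\xi}}$ to the trapezoidal error, which with $\xi=\pi/(\alpha L_z)$ is exactly the summand $\frac{4\pi\alpha}{h}\frac{\cosh(hz_{ij})}{1-e^{hL_z}}$ of $U_{\text{ELC}}$.

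This is precisely where your claim that ``the $1/(\tfrac{h^2}{4\alpha^2}+t^2)$ factor only contributes polynomial corrections and does not affect the exponential rate'' fails. The pole factor contributes a term decaying only like $e^{-h(L_z-|z_{ij}|)}$, i.e.\ at the rate $2\pi(L_z-H)/\max\{L_x,L_y\}$ for the smallest nonzero $h$ --- far slower than the Gaussian rate $e^{-\alpha^2(L_z-H)^2}$ you assign to the entire remainder. As written, your argument would therefore both lose $U_{\text{ELC}}$ from the decomposition and overstate the decay of the quadrature error. The fix is the content of the paper's Appendix~\ref{app::trapezoidal}: shift the contour only to height $a^*$ with $a<a^*\lesssim \pi/\xi+b/2$ (you cannot push it past the poles), pick up the residues at $t=\pm\i a$ as the ELC correction, and bound the remaining contour integrals to obtain the Gaussian remainder $E_{\text{err}}$ of Eq.~\eqref{eq::A.8}; only after this separation do the two decay rates in \eqref{eq::ELCerr} emerge. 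Your treatment of the IBC term and of the decay rate of $U_{\text{ELC}}$ itself (dominant mode $h_{\min}=2\pi/\max\{L_x,L_y\}$, $|z_{ij}|\le H$) is correct and matches the paper.
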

\begin{proof}
	Through the error analysis of the trapezoidal rule provided in Appendix~\ref{app::trapezoidal}, one has 
	\begin{equation}\label{eq::17}
		\begin{split}
			&\sum_{\V h}{}^\prime e^{\i \V h\cdot \V r_{ij}}\int_{-\infty}^{\infty}\frac{e^{-\frac{h^2}{4\alpha^2}-t^2}}{\frac{h^2}{4\alpha^2}+t^2}e^{2\i \alpha z_{ij}t}dt+\int_{-\infty}^{\infty}\frac{e^{-t^2}e^{2\i \alpha z_{ij}t}-1}{t^2}dt\\
			=&\frac{\pi}{\alpha L_z}\sum_{\bm{k}}{}^\prime \frac{e^{-\frac{k^2}{4\alpha^2}}}{k^2}e^{\i\bm{k}\cdot\bm{r}_{ij}}+\frac{4\alpha \pi}{L_z}\lim_{k_z\rightarrow 0}\frac{e^{-\frac{k_z^2}{4\alpha^2}}e^{\i k_z z_{ij}}-1}{k_z^2}+4\alpha\pi \sum_{h}{}^{\prime} \frac{e^{\i \bm{h}\cdot \bm{r}_{ij}}}{h}\frac{\cosh(hz_{ij})}{1-e^{hL_z}}\\
			&+\mathcal{O}\left(e^{-\alpha^2(L_z-|z_{ij}|)^2}\right).
		\end{split}
	\end{equation}
	By substituting Eq.~\eqref{eq::17} into Eq.~\eqref{eq:ewald2d-intform1}, it is observed that the first two terms on the RHS of Eq.~\eqref{eq::17} correspond to the first term and the IBC term in Eq.~\eqref{eq:ewald2d-ELC}, respectively. The third term in Eq.~\eqref{eq::17} acts as the ELC term in Eq.~\eqref{eq:ewald2d-ELC}. 
\end{proof}

Clearly, the first two terms in Eq.~\eqref{eq:ewald2d-ELC} closely resemble the Fourier component of the Ewald3D summation~\cite{ewald1921berechnung,zhonghanhu2014JCTC}. It can be understood as constructing a fully periodic system, referred to as $\Omega_{\text{3D}}$, which shares the same particle configuration, and $xy$ dimensions as the original quasi-2D system $\Omega_{\text{Q2D}}$; but extending the dimension in $z$ from $H$ to $L_z$ (see Fig.~\ref{fig:2dIllustration}(a)). This extra layer with thickness of~$L_z-H$ (containing no particles) is usually called a ``vacuum layer''.
Now according to Theorem~\ref{thm::err}, one has
\begin{equation}\label{eq::U_Q2D}
	U_{\text{Fourier}}(\Omega_{\text{Q2D}}) = U_{\text{Fourier}}(\Omega_{\text{3D}}) + U_{\text{IBC}} + U_{\text{ELC}} + U_{\text{err}}\;,
\end{equation}
The IBC and ELC terms can be understood to correct the unwanted $z$-replicas effect and boundary conditions at infinity when approximating the quasi-2D system as a fully periodic one with vacuum layers~\cite{arnold2002electrostatics,yeh1999ewald}.

The $U_{\text{ELC}}$ and $U_{\text{err}}$ terms are usually disregarded by mainstream MD softwares~\cite{thompson2021lammps,ABRAHAM201519}, and the remainning terms,~$U_{\text{Fourier}}(\Omega_{\text{3D}})$ and~$U_{\text{IBC}}$, can be conveniently evaluated via FFT-based Ewald3D methods. 
However, Eq.~\eqref{eq::ELCerr} indicates that to guarantee both the ELC and $U_{\text{err}}$ terms contribute below a certain tolerance $\varepsilon$, the choice for the thickness $L_z$
should satisfy
\begin{equation}
	L_z\geq H+\max\left\{\frac{\max\{L_x,L_y\}}{2\pi}\log\frac{1}{\varepsilon}, ~\frac{1}{\alpha}\sqrt{\log\frac{1}{\epsilon}}\right\}\;,
\end{equation}
which means one may need to choose a vacuum layer thickness much larger than~$H$ if $\alpha \ll 1/H$ or $\max\{L_x,L_y\}\gg H$. 
For FFT-based methods, this means one will need to discretize the whole extended domain with vacuum layers, which will greatly increase the computational cost.

\begin{remark}
	It is interesting to observe that the thickness of the vacuum layer ($L_z-H$) is determined by the system dimensions in $xy$ and the Ewald splitting parameter $\alpha$, but is independent of $H$.
\end{remark}

\subsection{Random batch Ewald2D method}

We now introduce the random batch Ewald2D (RBE2D) method.
Instead of deterministically compute the $\V k$-space sum either directly or via FFT, we now evaluate it through a stochastic approximation, namely, the random ``mini-batch'' sampling~\cite{jin2020random}. 
Notice that the low $\V k$ modes are more important due to the $e^{-k^2/(4\alpha^2)}$ factor, an importance sampling strategy was further developed for variations reduction.
The random batch Ewald (RBE) method has recently been developed for fully periodic systems~\cite{jin2021random,liang2022superscalability, liang2023random}, and here we extend it to quasi-2D systems.

The long-range component of the electrostatic force exerts on $i$-th particle is given by $\V F_i^{\te{Fourier}}=-\nabla_{\bm{r}_i}U_{\text{Fourier}}$, its random batch approximation is denoted as
\begin{equation}\label{eq:forceRBE}
	\begin{aligned}
		\V {\tilde{F}}_i^{\te{Fourier}}=-\sum_{\ell =1}^P \frac{S}{P}\frac{4\pi q_i\V k_\ell}{Vk_\ell^2}e^{-k_\ell^2/(4\alpha^2)}\te{Im}\left(e^{-\i \V k_\ell\cdot \V r_i}\rho(\V k_\ell)\right)\;,
	\end{aligned}
\end{equation}
where $P$ is the batch size, $V=L_xL_yL_z$ is the volume of the extended simulation domain $\Omega_{\text{3D}}$, and $S$ is the normalizing constant,
\begin{equation}\label{eq:normalizingS}
	\begin{aligned}
		S=\sum_{\V k \neq 0} e^{-k^2/(4\alpha^2)}\;.
	\end{aligned}
\end{equation}
In MD simulations, it is convenient to employ the Metropolis algorithm~\cite{metropolis1953equation} to sample from $\mathscr{P}(\bm{k})=e^{-k^2/(4\alpha^2)}/S$.

For quasi-2D systems, the RBE2D method offers a particular advantage: it is mesh free, so that unlike existing grid-based fast algorithms, the computational cost is independent with added vacuum layer thickness. 
This statement is justified more rigorously in Theorem \ref{Thm::1}.

\begin{theorem}\label{Thm::1} 
	Denote the fluctuation of the random batch approximation in force by $\V \Xi_{\V F,i}=\V {\tilde{F}}_i^{\emph{Fourier}}-\V {F}_i^{\emph{Fourier}}$. The random variable $\V \Xi_{\V F,i}$ has zero expectation, i.e., $\mathbb{E}\V \Xi_{\V F,i}=\V 0$, and its variance is given by
	\begin{equation}\label{eq::30}
		\mathbb{E}|\V \Xi_{\V F,i}|^2=\frac{1}{P}\left[\frac{(4\pi q_i)^2S}{V^2}\sum_{\bm{k}\neq \bm{0}}\frac{e^{-k^2/(4\alpha^2)}}{k^2}\left|\emph{Im}\left(e^{-\i \V k_\ell\cdot \V r_i}\rho(\V k_\ell)\right)\right|^2-|\V F_{i}^{\emph{Fourier}}|^2\right]
	\end{equation}
	which is bounded and scales as $\mathcal O(1/P)$. Furthermore, under the  Debye-H\"uckel (DH) assumption~\cite{debye1923theorie,hansen2013theory}, $\mathbb{E}|\V \Xi_{\V F,i}|^2$ is independent of both particle number $N$ and size $L_z$.
\end{theorem}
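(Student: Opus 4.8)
\textbf{Proof plan for Theorem~\ref{Thm::1}.}

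The plan is to treat $\V{\tilde F}_i^{\text{Fourier}}$ as a sum of $P$ i.i.d.\ random vectors, each obtained by drawing $\V k_\ell \sim \mathscr P(\bm k) = e^{-k^2/(4\alpha^2)}/S$ and setting the summand to be $-\frac{S}{P}\frac{4\pi q_i \V k_\ell}{V k_\ell^2}e^{-k_\ell^2/(4\alpha^2)}\text{Im}\big(e^{-\i\V k_\ell\cdot\V r_i}\rho(\V k_\ell)\big)$. First I would compute the expectation of a single draw: because the sampling weight $e^{-k^2/(4\alpha^2)}/S$ exactly cancels the $e^{-k^2/(4\alpha^2)}$ in the numerator up to the factor $S$, summing over all $\bm k\neq\bm 0$ reproduces $\V F_i^{\text{Fourier}}$ term by term, which gives $\mathbb E\,\V{\tilde F}_i^{\text{Fourier}} = \V F_i^{\text{Fourier}}$ and hence $\mathbb E\,\V\Xi_{\V F,i}=\V 0$. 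Then, since the $P$ draws are independent and identically distributed, $\text{Var}(\V{\tilde F}_i^{\text{Fourier}}) = \frac1P\,\text{Var}(\text{single draw})$, and $\text{Var}(\text{single draw}) = \mathbb E|\cdot|^2 - |\V F_i^{\text{Fourier}}|^2$; computing the second moment of the single draw introduces the factor $S^2$ from the $(S/P)^2$ prefactor times the $1/S$ sampling density (one power of $S$ survives), yielding exactly the bracketed expression in Eq.~\eqref{eq::30}.

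The boundedness and the $\mathcal O(1/P)$ scaling then follow by bounding the sum $\sum_{\bm k\neq\bm 0} \frac{e^{-k^2/(4\alpha^2)}}{k^2}\big|\text{Im}(e^{-\i\V k\cdot\V r_i}\rho(\V k))\big|^2$: using $|\text{Im}(e^{-\i\V k\cdot\V r_i}\rho(\V k))|^2 \le |\rho(\V k)|^2 \le (\sum_j |q_j|)^2$ and the fact that $\sum_{\bm k\neq\bm 0} e^{-k^2/(4\alpha^2)}/k^2$ converges (the $k^{-2}$ singularity at the origin is excluded and the Gaussian ensures convergence at infinity), together with $S<\infty$, gives a finite bound; the whole variance carries the explicit $1/P$ prefactor.

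The more delicate claim is the last sentence: under the Debye--H\"uckel approximation the variance is independent of $N$ and of $L_z$. For this I would invoke the DH closure to replace the microscopic structure factor by its ensemble average, $\mathbb E\,|\rho(\V k)|^2 \approx N\,q^2\,S_{\text{DH}}(\V k)$ (up to the appropriate charge-weighted normalization), where $S_{\text{DH}}$ is the Debye--H\"uckel structure factor $\sim k^2/(k^2+\kappa^2)$ with $\kappa$ the inverse Debye length; the extra factor $k^2$ cancels the $1/k^2$ in Eq.~\eqref{eq::30}, leaving $\sum_{\bm k} \frac{S}{V^2} e^{-k^2/(4\alpha^2)}\frac{N q^2}{k^2+\kappa^2}$. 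The key point is that $S/V \to (4\pi\alpha^2)^{3/2}/(4\pi^3)\cdot\!$ (a dimensionless constant) as the $\bm k$-sum becomes a Riemann integral over $\mathbb R^3$ in the large-volume limit, so $S/V^2 \sim \rho_{\text{number}}/(N\cdot\text{const})$, and the surviving $N$ in the numerator cancels the $N$ hidden in $1/V^2$ once one writes everything in terms of the physical number density $N/(L_xL_yH)$ rather than $N/(L_xL_yL_z)$; the $L_z$-dependence drops because the integrand, being smooth and integrable over all of $\bm k$-space, has a Riemann-sum limit insensitive to the $z$-spacing $2\pi/L_z$. I expect this last step --- carefully tracking how the factors of $N$, $V$, $L_z$ and the density combine under the DH closure and the continuum ($\bm k$-integral) limit --- to be the main obstacle, since it requires being precise about which volume ($L_xL_yH$ versus $L_xL_yL_z$) enters the number density and about the uniformity of the trapezoidal/Riemann approximation in $L_z$.
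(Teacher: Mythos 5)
Your derivation of the unbiasedness and of the variance identity \eqref{eq::30} via the i.i.d.\ decomposition over the $P$ draws is exactly the (largely implicit) argument of the paper, which declares unbiasedness ``straightforward'' and simply states \eqref{eq::30}; and your bound $\left|\mathrm{Im}\left(e^{-\i \V k\cdot \V r_i}\rho(\V k)\right)\right|^2\le \big(\sum_j|q_j|\big)^2$ together with the convergence of $\sum_{\V k\neq\V 0}e^{-k^2/(4\alpha^2)}/k^2$ does give boundedness and the $\mathcal O(1/P)$ scaling at fixed $N$. Up to that point the two arguments coincide.

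The gap is in the final claim, and it sits precisely where you flagged the ``main obstacle.'' You apply the DH closure to the bulk charge structure factor, $\mathbb E|\rho(\V k)|^2\approx Nq^2k^2/(k^2+\kappa^2)$, which carries an explicit factor of $N$, and hope to cancel it against volume factors. Track the powers of $V$: converting $\sum_{\V k}$ into $\tfrac{V}{(2\pi)^3}\int d^3k$ consumes one power of $V$ from the prefactor $S/V^2$, and the estimate $S\le \tfrac{V}{(2\pi)^3}\int_0^{\infty}4\pi k^2e^{-k^2/(4\alpha^2)}dk=\alpha^3V/\pi^{3/2}$ consumes the other; nothing remains to absorb the $N$, and your route yields a bound of order $\alpha^4 q_i^2 q^2 N/P$ rather than one uniform in $N$. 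The paper avoids this by invoking the DH assumption on the \emph{screened, particle-centred} quantity itself: it asserts (citing the original RBE paper) that $\left|\mathrm{Im}\left(e^{-\i \V k\cdot \V r_i}\rho(\V k)\right)\right|^2\le C$ with $C$ an $N$-independent constant --- the screening cloud seen from particle $i$ contributes $\mathcal O(1)$, not $\mathcal O(N)$ --- after which the two powers of $V$ cancel exactly as above, giving $\mathbb E|\V\Xi_{\V F,i}|^2\le 8\alpha^4Cq_i^2/(\pi P)$, manifestly independent of $N$ and $L_z$ (with $\alpha\sim N^{1/3}/(L_xL_yH)^{1/3}$ chosen independent of $L_z$). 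Your Riemann-sum observation is the correct mechanism for the $L_z$-independence and matches the paper; but for the $N$-independence you must replace the $\mathcal O(N)$ bulk structure factor by the $\mathcal O(1)$ DH bound on $\mathrm{Im}\left(e^{-\i \V k\cdot \V r_i}\rho(\V k)\right)$, which is the actual content of the DH assumption as the paper uses it.
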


\begin{proof}
	The property of unbiasedness in Theorem~\ref{Thm::1} is straightforward and guarantees the consistency of the stochastic approximation, i.e., $\mathbb{E}\V{ \tilde{F}}_i^{\te{Fourier}}=\mathbb{E}\V {F}_i^{\te{Fourier}}$. 
	Let us now consider the variance. By the DH theory, the structure factor term in Eq.~\eqref{eq::30} is bounded by a constant $C$~\cite{jin2021random} for $\bm{k}\neq \bm{0}$,
	\begin{equation}
		\left|\te{Im}\left(e^{-\i \V k\cdot \V r_i}\rho(\V k)\right)\right|^2\leq C,
	\end{equation}
	and vanishes for $\bm{k}=\bm{0}$. 
	Further by the monotonicity of Gaussian on $[0, +\infty ]$, it follows that
	\begin{equation}\label{eq::32}
		\begin{split}
			\mathbb{E}|\V \Xi_{\V F,i}|^2\leq \frac{8 CSq_i^2}{PV}\int_{0}^{\infty}e^{-k^2/(4\alpha^2)}dk= \frac{8\sqrt{\pi}\alpha CSq_i^2}{PV}\;.
		\end{split}
	\end{equation}
	Analogously, one can obtain the following estimate for the normalization constant $S$:
	\begin{equation}\label{eq::Sa}
		\begin{split}
			S\leq \frac{V}{(2\pi)^3}\int_{0}^{\infty}4\pi k^2e^{-k^2/(4\alpha^2)}dk=\frac{\alpha^3V}{\pi^{3/2}}.
		\end{split}
	\end{equation}
	Substituting Eq.~\eqref{eq::Sa} into Eq.~\eqref{eq::32} yields
	\begin{equation}
		\mathbb{E}|\V \Xi_{\V F,i}|^2\leq\frac{8\alpha^4Cq_i^2}{\pi P}\sim \mathcal O\left(\frac{1}{P}\right)
	\end{equation}
	which is independent of either $N$ or $L_z$.
\end{proof}

Theorem~\ref{Thm::1} suggests that the variance in the random batch approximation can be effectively controlled 
by appropriately choosing the batch size $P$, which is independent with particle number $N$.
In the context of Ewald summation, typical choices for $\alpha$ are $(\sqrt{N}/V)^{1/3}$ for direct truncation~\cite{kolafa1992cutoff} and $(N/V)^{1/3}$ for FFT-based calculation~\cite{deserno1998mesh}. 
Now for the RBE2D method, it is clear that changing the vacuum layer thickness does not affect its efficiency. 
In practice, we set $\alpha\sim N^{1/3}/(L_xL_yH)^{1/3}$ (independent of $L_z$), then it is expected that the variance of the force remains at the same level as one changes $L_z$.
This is further validated by numerical results in Section~\ref{subsec::electrolyte-neutral}.

\subsection{Convergence and complexity of the RBE2D} \label{sec::convergence}

In this section, we further examine the convergence and complexity of RBE2D accelerated MD simulations. 
First, we revisit the the common purpose of MD simulations, which is to explore the configuration space and obtain static/dynamic ensemble-averaged properties by solving the equations of motion for $N$ interacting particles.
For example, consider the widely used canonical ensemble, where the system has fixed particle number $N$, volume $V$ and temperature $T$, which can be simulated as solving the following equations, namely, MD with Langevin thermostat~\cite{frenkel2023understanding}:
\begin{equation}\label{Langevin}
	\begin{aligned}
		& d \bm{r}_i=\bm{v}_i d t, \\
		& m_i d \bm{v}_i=\left[\bm{F}_i-\gamma \bm{v}_i\right] d t+\sqrt{2 \gamma k_{\mathrm{B}} T} d \bm{W}_i,
	\end{aligned}
\end{equation}
where $\bm{r}_i$, $m_i$, and $\bm{v}_i$ represent the position, mass, and velocity of the $i$-th particle, respectively. $\V{F}_i$ is the force exerts on the $i$-th particle, $\{\bm{W}_i\}$ are i.i.d.~Wiener processes, and $\gamma$ is the reciprocal characteristic time scale associated with the thermostat.
In practice, the stochastic differential equations are discretized with proper numerical schemes and integrated with time step~$\Delta t$.
Let~$(\V{r}_i, \V{v}_i)$ be the solution to Eq.~\eqref{Langevin} with~$\bm{F}_i = \bm{F}^{\text{exact}}_i$, where~$\bm{F}^{\text{exact}}_i$ is the exact force on the $i$-th particle; and let $(\widetilde{\bm{r}}_i,\widetilde{\bm{v}}_i)$ be the solution to the same set of equations but with RBE2D approximated force given by~$\bm{F}_i = \bm{F}^{\text{exact}}_i + \bm{\Xi}_{\bm{F},i}$.
Then the relation between the two sets of solutions satisfies the following Theorem~\ref{thm::2}~\cite{jin2021convergence}.

\begin{theorem}\label{thm::2}
	If the forces $\bm{F}_i$ are bounded and Lipschitz, and the fluctuation of stochastic force satisfies $\mathbb{E}\V \Xi_{\V F,i}=0$ and $\mathbb{E}|\V \Xi_{\V F,i}|^2\leq \Lambda$, then for any $t_{\emph{MD}}>0$, there exists $C(t_{\emph{MD}})>0$ such that
	\begin{equation}\label{eq::Convergence}
		\sup _{t \in[0, t_{\emph{MD}}]} \sqrt{\frac{1}{N} \sum_{i=1}^N \mathbb{E}\left(\left|\bm{r}_i-\widetilde{\bm{r}}_i\right|^2+\left|\bm{v}_i-\widetilde{\bm{v}}_i\right|^2\right)} \leq C(t_{\emph{MD}}) \sqrt{\Lambda \Delta t}.
	\end{equation}
\end{theorem}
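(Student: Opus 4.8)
The plan is to treat Eq.~\eqref{Langevin} (with the exact force) and its RBE2D-perturbed counterpart as two coupled systems of stochastic differential equations driven by the \emph{same} Wiener processes $\{\bm W_i\}$, and to bound the mean-square difference of their solutions by a Gr\"onwall-type argument. First I would subtract the two systems to obtain the evolution equations for the discrepancies $\bm e_i := \bm r_i - \widetilde{\bm r}_i$ and $\bm\varepsilon_i := \bm v_i - \widetilde{\bm v}_i$; because the noise is common, the $d\bm W_i$ terms cancel at the continuous level, leaving a drift that splits into a \emph{deterministic} part coming from the difference $\bm F_i^{\text{exact}}(\{\bm r\}) - \bm F_i^{\text{exact}}(\{\widetilde{\bm r}\})$ and a \emph{stochastic forcing} part equal to the fluctuation $\bm\Xi_{\bm F,i}$ evaluated along the perturbed trajectory. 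The Lipschitz hypothesis on $\bm F_i$ controls the first part by $L\sum_j|\bm e_j|$, while the second part is a martingale-difference-type input whose second moment is bounded by $\Lambda$ (uniformly, and with zero conditional mean once one is careful about the filtration, since the batch is drawn independently of the current configuration).

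Next I would introduce the energy-like quantity $\mathcal E(t) := \frac1N\sum_{i=1}^N \mathbb E\big(|\bm e_i(t)|^2 + |\bm\varepsilon_i(t)|^2\big)$ and differentiate it in $t$ (equivalently, apply It\^o's formula to $|\bm e_i|^2+|\bm\varepsilon_i|^2$ and sum). The cross terms $\bm e_i\cdot\bm\varepsilon_i$ and $\bm\varepsilon_i\cdot(\bm F_i^{\text{exact}}(\{\bm r\})-\bm F_i^{\text{exact}}(\{\widetilde{\bm r}\}))$ are handled by Cauchy--Schwarz/Young's inequality, producing terms of the form $C\,\mathcal E(t)$; the friction term $-\gamma|\bm\varepsilon_i|^2$ has a favorable sign and can simply be dropped; and the It\^o correction from the stochastic forcing contributes exactly $\tfrac{1}{m_i^2}\mathbb E|\bm\Xi_{\bm F,i}|^2 \le \Lambda/m_{\min}^2$ per particle (after one discretizes with step $\Delta t$, it is this term that carries the factor $\Delta t$, since over a step of length $\Delta t$ the injected mean-square error is $\mathcal O(\Lambda\,\Delta t^2)$ in position and $\mathcal O(\Lambda\,\Delta t)$ in the velocity increment summed over $\mathcal O(t_{\text{MD}}/\Delta t)$ steps). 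Collecting, one arrives at a differential inequality $\tfrac{d}{dt}\mathcal E(t) \le C_1\,\mathcal E(t) + C_2\,\Lambda\,\Delta t$ with $\mathcal E(0)=0$, and Gr\"onwall's lemma yields $\mathcal E(t) \le \tfrac{C_2}{C_1}\Lambda\Delta t\,(e^{C_1 t}-1)$, so that $\sup_{t\le t_{\text{MD}}}\sqrt{\mathcal E(t)} \le C(t_{\text{MD}})\sqrt{\Lambda\,\Delta t}$ as claimed, with $C(t_{\text{MD}})$ growing like $e^{C_1 t_{\text{MD}}/2}$.

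The main obstacle is the bookkeeping of the discrete-in-time scheme: the theorem is stated for the \emph{discretized} dynamics with step $\Delta t$, so one cannot argue purely at the SDE level. I would set up a one-step error recursion for $\mathcal E_n := \mathcal E(t_n)$, carefully separating (i) the local consistency error of the integrator (which is the same for both trajectories and hence cancels, \emph{provided} the same scheme and same noise increments are used), (ii) the accumulated Lipschitz propagation of past discrepancies, and (iii) the fresh injected variance $\propto \Lambda$ at step $n$, which is conditionally mean-zero given $\mathcal F_{t_n}$ so that its cross term with the accumulated error vanishes in expectation and only its second moment survives. The delicate point is ensuring the stochastic forcings at different time steps are (conditionally) uncorrelated and mean-zero — this is exactly where the independence of the random batch draw from the particle configuration, and the unbiasedness $\mathbb E\bm\Xi_{\bm F,i}=0$ established in Theorem~\ref{Thm::1}, are essential. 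Once that martingale structure is in place, the discrete Gr\"onwall inequality closes the estimate; the remaining steps (choosing constants, verifying boundedness of $\bm F_i$ is used to keep moments of the trajectories finite so that the Lipschitz bound can be applied globally) are routine. Since a full proof of this type already appears in~\cite{jin2021convergence}, I would reference that argument for the technical details and only indicate the adaptation needed here, namely that the quasi-2D force fluctuation $\bm\Xi_{\bm F,i}$ satisfies precisely the hypotheses $\mathbb E\bm\Xi_{\bm F,i}=0$ and $\mathbb E|\bm\Xi_{\bm F,i}|^2\le\Lambda$ by Theorem~\ref{Thm::1}.
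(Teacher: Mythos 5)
The paper gives no proof of this theorem at all---it is imported verbatim from \cite{jin2021convergence}---and your sketch is a faithful outline of the synchronous-coupling-plus-Gr\"onwall argument used there, correctly identifying the conditional unbiasedness of $\bm{\Xi}_{\bm{F},i}$ over each batch interval (rather than an It\^o correction, since the fluctuation enters as a drift) as the source of the $\Lambda\,\Delta t$ factor. Since you also defer to that reference for the technical details, your proposal matches the paper's treatment.
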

Theorem~\ref{thm::2} indicates that the RBE2D approximated dynamics is capable of capturing finite time structure and dynamic properties.
For long-time simulations, additional assumptions are needed regarding force regularity~\cite{jin2022random}. 
It worths noting that, although the Coulomb potential is neither bounded nor Lipschitz at $r=0$, actual MD simulations will include the Lennard-Jones (LJ) potential, which provides a stronger repulsive force when particles get close, thus avoiding the divergence issue in $\bm{F}_i$. So that Theorem \ref{thm::2} can still valid for practical MD simulations.

Additionally, following Theorem \ref{thm::2}, the fluctuations introduced by random batch approximation, $\V \Xi_{\V F,i}$,  can be physically understood as heating effects, which will cause a temperature drift for the system and is unwanted. 
For NVT and NPT ensembles, the heating effect can be eliminated by introducing appropriate thermostats such as the Langevin thermostat~\cite{feller1995constant}, the Nos\'e-Hoover (NH) thermostat~\cite{hoover1985canonical}, etc. With an additional weak-coupled bath on the Newtonian dynamics to avoid energy drift, the NVE ensemble can also be accurately simulated with random batch-type approximations~\cite{liang2024JCP}.


Next, we analyze the computational complexity of the RBE2D accelerated MD simulations.
The detailed algorithm is summarized in Algorithm~\ref{Alg::1}.
For a system with~$N$ particles, we can set~$\alpha \sim N^{1/3}/(L_xL_yH)^{1/3}$, then the real space computation complexity is of~$\mathcal{O}(N)$; and by the random batch importance sampling strategy, the cost of the $\V k$ space computation is of~$\mathcal{O}(PN)$, where~$P\sim \mathcal O(1)$ the batch size which is independent of $N$.
As such, the total computational complexity is of~$\mathcal{O}(N)$.

\begin{algorithm}[ht]
	\caption{(RBE2D accelerated molecular dynamics for quasi-2D systems)}\label{Alg::1}
	\begin{algorithmic}[1]
		\State Choose $\alpha$, $r_c$ (the cutoff in real space), $\Delta t$, the size of $L_z$, and batch size $P$. Initialize the positions and velocities of ions as well as surface charge densities. Set $N_T$ as the total MD time steps.
		\For {$n \text{ in } 1: N_T$}
		\State Sample $P$ nonzero frequencies $\bm{k}\sim e^{-k^2/(4\alpha)}$ by the Metropolis procedure to form  set $\mathcal{K}$.
		\State The $\V k$-space component of the force, $\bm{F}_i^{\te{Fourier}}$, is computed using the random batch approximation $\widetilde{\V F}_i^{\te{Fourier}}$ with the $P$ frequencies chosen from $\mathcal{K}$.
		\State The real-space component of the force, $\bm{F}_i^{\te{real}}$, is computed by the neighbor list method. 
		\State Integrate the equations of motion for time step $\Delta t$ with an appropriate thermostat and its corresponding integration scheme. 
		
		\EndFor
	\end{algorithmic}
\end{algorithm}
Besides its linear complexity, the RBE2D method offers two other significant advantages:
\begin{itemize}
	\item Communication efficiency: thanks to the sampling nature, RBE2D only requires a communication cost of~$O(1)$. This streamlined communication facilitates easy parallelization and high scalability.
	\item Vacuum layer flexibility: the RBE2D is mesh free, and choosing a larger vacuum layer thickness incurs no extra cost. This provides flexibility for one to always choose a proper $L_z$ to guarantee accuracy, without sacrificing efficiency.
\end{itemize}

\section{Quasi-2D systems under dielectric confinement}\label{sec::RBE2D_dielectric}
In this section, we extend the RBE2D to the challenging case of systems with dielectric jumps at the slab interfaces ($z=0$ and $z=H$).
We will first introduce the three-layer sandwich model, and its computation by truncated image reflection series.
Detailed analysis of the truncation error is presented; along with a recalibrated efficient formula for the computation of electrostatic energy and force with minimum extra cost. Finally, we summarize the algorithm framework to extend the RBE2D method to systems under dielectric confinement.

\subsection{The three-layer sandwich model}
A schematic of the three-layer sandwich model illustrating the dielectrically confined quasi-2D system is shown in Fig.~\ref{fig:2dIllustration}(b). 
Two dielectric interfaces are introduced at $z=0$ and $z=H$, dividing the system into three layers. Each of these layers, from top to bottom, are labeled as $\Omega_{\te{top}}$, $\Omega_{\te{c}}$, and $\Omega_{\te{bot}}$, with respective permittivities $\varepsilon_{\te{top}}$, $\varepsilon_{\te{c}}$, and $\varepsilon_{\te{bot}}$. 
Additionally, we still assume the total charge neutrality condition (Eq.~\eqref{eq:neutrality}) is valid. 

First, let us define the dimensionless dielectric contrasts at the interfaces as:
\begin{equation}
	\gamma_{\text{top}}=\frac{\varepsilon_{\text{c}}-\varepsilon_{\text{top}}}{\varepsilon_{\text{c}}+\varepsilon_{\text{top}}} \quad \text{and} \quad \gamma_{\text{bot}}=\frac{\varepsilon_{\text{c}}-\varepsilon_{\text{bot}}}{\varepsilon_{\text{c}}+\varepsilon_{\text{bot}}}\;.
\end{equation}
Note that $|\gamma_{\text{top}}|\leq 1$ and $|\gamma_{\text{bot}}|\leq 1$ since $\varepsilon_{\text{c}}>0$. 
We are particularly interested in the electrostatic energy $U^{\text{c}}$, for charges confined within the central layer $[0,H]$. 
The polarization contribution can be conveniently expressed as a series of image charges resulting from 1) periodic replications along $xy$ by doubly-periodicity and 2) mirror reflections along $z$ (refer to \cite{jackson1999classical} and also Fig.~\ref{fig:2dIllustration}(b)):
\begin{equation}\label{eq::Uc}
	U^{\text{c}}=\frac{1}{2}\sum_{i,j=1}^{N}q_iq_j\left[\sum_{\bm{n}}{}^\prime \frac{1}{\left|\bm{r}_{ij}+\bm{\ell}\right|}+\sum_{\bm{n}}\sum_{l=1}^{\infty}\left(\frac{\gamma^{(l)}_{+}}{\left|\bm{r}_i-\bm{r}_{j+}^{(l)}+\bm{\ell}\right|}+\frac{\gamma^{(l)}_{-}}{\left|\bm{r}_i-\bm{r}_{j-}^{(l)}+\bm{\ell}\right|}\right)\right],
\end{equation}
where the factors for the $l$th-order image charges are $\gamma^{(l)}_+=\gamma_{\text{top}}^{\lfloor l/2 \rfloor}\gamma_{\text{bot}}^{\lceil l/2 \rceil}$ and $\gamma^{(l)}_-=\gamma_{\text{top}}^{\lceil l/2 \rceil}\gamma_{\text{bot}}^{\lfloor l/2 \rfloor}$. Here $+(-)$ corresponds to the image charges in $\Omega_{\te{top}}$ ($\Omega_{\te{bot}}$). The notation $\lceil x\rceil$ $(\lfloor x\rfloor)$ represents the ``ceil'' (``floor'') function, and the positions of the $l$th-order image of charge $q_j$ are given by $\bm{r}_{j\pm}^{(l)}=(x_{j},y_j,z_{j\pm}^{(l)})$ with
\begin{equation}\label{eq::31}
	\begin{split}
		z_{j+}^{(l)}=(-1)^lz_{j}+ 2\lceil l/2\rceil H \quad\,\text{and}\quad\, z_{j-}^{(l)}=(-1)^lz_{j}- 2\lfloor l/2\rfloor H.
	\end{split}
\end{equation}
Clearly, compared to the previous homogeneous case, the extra polarization contribution in terms of image charge series can significantly impact the efficiency. This effect becomes more pronounced when $|\gamma_{\text{top}}\gamma_{\text{bot}}|\to 1$,
as the convergence of the image series in Eq.~\eqref{eq::Uc} can become extremely slow.

To make the problem computationally feasible, existing methods~\cite{dos2015electrolytes,dosSantos2016JPCB,yuan2021particle} typically involve truncating the number of image charges in $z$ at certain reflection level $l=M$, and then correspondingly adjust the vacuum layer thickness by choosing $l_z=(2M+1)H$ so as to enclose all the $M$-level reflected images. 
Subsequently, an additional vacuum layer as a safe zone is introduced, increasing the system size in $z$ to $L_z=\eta_{z}l_z$ with $\eta_z> 1$. 
Unfortunately, so far there is no clear theoretical guidance about how to select parameters $M$ and $\eta_z$, with some given accuracy requirement. 
Even so, due to the simplicity of image charge representation, this approach has still been widely adopted in practical simulations to account for the dielectric confinement effect.

In subsequent sections, we will investigate the choice of $M$ and $L_z$ based on the Ewald2D reformulation introduced in section~\ref{subsec::IBCELC}. 
To the best of our knowledge, this work possibly provides the first theoretical study on how $M$ and $L_z$ influence the error. 
It worths noting that, regarding the parameter choice, conflicting conclusions were made in literature based on numerical tests. 
For example, when $|\gamma_{\text{top}}\gamma_{\text{bot}}|\approx 1$, dos Santos and Levin~\cite{dos2015electrolytes} found that approximately $M=50$ levels of images are required to achieve satisfactory accuracy; while Yuan, Antila, and Luijten~\cite{yuan2021particle} suggests that choosing $M=5$ is sufficient. 
In~\cite{yuan2021particle}, it was also observed that choosing a very large $M$ may lead to counter-intuitive accuracy loss. 
Interestingly, our theoretical analysis below can provide explanations for all the subtle phenomena above.

\subsection{Truncation error analysis for the image reflection series}\label{subsec::Ewaldimage}
To analyze the truncation error for the image series in Eq.~\eqref{eq::Uc} at certain reflection level  $l=M$,
it is useful to first introduce the quasi-2D Poisson summation formula, summarized in Lemma \ref{thm:psf}
\begin{lemma}\label{thm:psf} 
	Let $f(\bm{\rho},z)$ be a function which is doubly-periodic in $\bm{\rho}=(x,y)$. Suppose that $f$ has its Fourier transform $\widetilde{f}$ and $\bm{r}=(\bm{\rho},z)$, then one has
	\begin{equation}
		\sum_{\boldsymbol{n}} f(\boldsymbol{r}+\boldsymbol{\ell})=\frac{1}{2 \pi L_x L_y} \sum_{\bm{k}_{\bm{\rho}}} \int_{\mathbb{R}} \tilde{f}(\bm{k}_{\bm{\rho}}, k_z) e^{\i \bm{k}_{\bm{\rho}} \cdot \boldsymbol{\rho}} e^{\i k_z z} d k_z,
	\end{equation}
	where $\bm{k}_{\bm{\rho}}=2\pi(n_x/L_x,n_y/L_y)$.
\end{lemma}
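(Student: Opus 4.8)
The plan is to prove this as a direct consequence of the classical (2D) Poisson summation formula applied to the periodic directions, combined with a trivial (continuous) Fourier representation in the $z$-direction. The statement is essentially bookkeeping: one uses periodicity in $\bm\rho$ to expand the lattice sum as a 2D Fourier series over the reciprocal lattice $\bm k_{\bm\rho}=2\pi(n_x/L_x,n_y/L_y)$, and then recognizes each Fourier coefficient as a $z$-slice of the full Fourier transform $\widetilde f$.

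First I would fix $z$ and set $g(\bm\rho):=\sum_{\bm n} f(\bm\rho+\bm\ell,z)$, where $\bm\ell=(n_xL_x,n_yL_y)$. This $g$ is periodic on the torus $[0,L_x)\times[0,L_y)$, so (assuming enough decay/smoothness of $f$ for the sum to converge and the Fourier series to represent $g$ pointwise) it admits the expansion
\begin{equation}
g(\bm\rho)=\frac{1}{L_xL_y}\sum_{\bm k_{\bm\rho}} \widehat g(\bm k_{\bm\rho})\, e^{\i \bm k_{\bm\rho}\cdot\bm\rho},\qquad \widehat g(\bm k_{\bm\rho})=\int_{[0,L_x)\times[0,L_y)} g(\bm\rho)\,e^{-\i\bm k_{\bm\rho}\cdot\bm\rho}\,d\bm\rho.
\end{equation}
Next I would compute $\widehat g(\bm k_{\bm\rho})$ by substituting the definition of $g$, interchanging sum and integral, and performing the standard "unfolding" step: summing the integral over all translated fundamental cells $[0,L_x)\times[0,L_y)+\bm\ell$ reassembles an integral over all of $\mathbb R^2$. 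This gives $\widehat g(\bm k_{\bm\rho})=\int_{\mathbb R^2} f(\bm\rho,z)\,e^{-\i\bm k_{\bm\rho}\cdot\bm\rho}\,d\bm\rho$, i.e. the 2D Fourier transform of $f(\cdot,z)$ in the transverse variables, evaluated at $\bm k_{\bm\rho}$.

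Finally I would insert the $z$-direction Fourier inversion: writing the 2D transform above as the inverse $z$-Fourier transform of the full 3D transform $\widetilde f(\bm k_{\bm\rho},k_z)$, namely $\int_{\mathbb R^2} f(\bm\rho,z)e^{-\i\bm k_{\bm\rho}\cdot\bm\rho}d\bm\rho=\frac{1}{2\pi}\int_{\mathbb R}\widetilde f(\bm k_{\bm\rho},k_z)\,e^{\i k_z z}\,dk_z$, consistent with the normalization implied by the $1/(2\pi L_xL_y)$ prefactor in the statement. Substituting this into the Fourier series for $g$ yields exactly the claimed identity. I would close by noting the hypotheses (doubly-periodicity of $f$ in $\bm\rho$, existence of $\widetilde f$, and sufficient decay) justify the two interchanges of summation and integration; this convergence/interchange justification is the only genuinely non-routine point, and in the intended application $f$ is a Gaussian-screened Coulomb kernel (or a finite image sum of such), so the required decay is manifest and I would simply remark on it rather than belabor it.
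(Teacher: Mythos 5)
Your proof is correct: the periodize-in-$\bm\rho$, unfold-the-cell-integrals, then invert-in-$k_z$ argument is the standard derivation, and the $\tfrac{1}{2\pi L_xL_y}$ normalization checks out against the stated convention for $\widetilde f$. The paper offers no proof of this lemma at all (it is asserted as the known quasi-2D Poisson summation formula), so your write-up simply supplies the canonical argument it implicitly relies on.
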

Applying Lemma~\ref{thm:psf} to Eq.~\eqref{eq::Uc}, one has
\begin{equation}\label{eq::33}
	\begin{split}
		U^{\text{c}}&=\frac{1}{2}\sum_{i,j=1}^{N}q_iq_j\sum_{\bm{n}}{}^\prime \frac{1}{\left|\bm{r}_{ij}+\bm{\ell}\right|}+\frac{\pi}{L_xL_y}\sum_{i,j=1}^{N}q_iq_j\sum_{\bm{h}}{}^{\prime}\frac{e^{-\i \bm{h}\cdot\bm{r}_{ij}}}{h}\beta_{ij}(h)\;,
	\end{split}
\end{equation}
where we leave the first term same as Eq.~\eqref{eq:2dperiodicU}, which is the energy for a homogeneous quasi-2D Coulomb system, and 
\begin{align}\label{eq::34}
	\beta_{ij}(h):=\sum\limits_{l=1}^{\infty}\left[\gamma_+^{(l)}e^{-h\left|z_i-z_{j+}^{(l)}\right|}+\gamma_-^{(l)}e^{-h\left|z_i-z_{j-}^{(l)}\right|}\right].
\end{align}
Substituting the definitions of $\gamma_{\pm}^{(l)}$ and Eq.~\eqref{eq::31} into Eq.~\eqref{eq::34} and rearranging terms, we obtain
\begin{equation}
	\begin{split}
		\beta_{ij}(h)&=\left[\gamma_{\text{bot}}e^{-h|z_i+z_{j}|}+\gamma_{\text{top}}e^{-h|2H-(z_i+z_{j})|}+\gamma_{\text{top}}\gamma_{\text{bot}}\left(e^{-h|2H-z_{ij}|}+e^{-h|2H+z_{ij}|}\right)\right]\zeta(h)\;,
	\end{split}
\end{equation}
where 
\begin{equation}\label{eq::36}
	\zeta(h):=\sum_{n=0}^{\infty}\left(\gamma_{\text{top}}\gamma_{\text{bot}}e^{-2hH}\right)^{n}=\frac{1}{1-\gamma_{\text{top}}\gamma_{\text{bot}}e^{-2hH}}\;.
\end{equation}
Finally, we arrive at Theorem \ref{thm::imagetrun} for truncation error of the image reflection series, which can be directly obtained by Eqs.~\eqref{eq::33}-\eqref{eq::36}.
\begin{theorem}\label{thm::imagetrun} 
	If one truncates the image reflection series in Eq.~\eqref{eq::Uc} at $l=M$, the resulting $U^{\emph{c}}_{M}$ is an approximation of $U^{\emph{c}}$ with truncation error
	\begin{equation}\label{eq::37}
		|U^{\emph{c}}-U^{\emph{c}}_{M}|\sim \mathcal{O}\left(\left|\gamma_{\emph{top}}\right|^{\lfloor\frac{M+1}{2}\rfloor}\left|\gamma_{\emph{bot}}\right|^{\lceil\frac{M+1}{2}\rceil}e^{-\frac{2\pi H (M-1)}{\max\{L_x,L_y\}}}\right).
	\end{equation}
\end{theorem}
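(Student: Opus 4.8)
The plan is to work directly from the exact representation of the polarization energy given in Eqs.~\eqref{eq::33}--\eqref{eq::36}, which already reorganizes the full image series into a closed form involving the geometric factor $\zeta(h)$. Since the truncated energy $U^{\emph{c}}_M$ corresponds to keeping only the image charges up to reflection level $l=M$, the error $U^{\emph{c}}-U^{\emph{c}}_M$ is precisely the tail $\sum_{l>M}$ of the series defining $\beta_{ij}(h)$ in Eq.~\eqref{eq::34}, paired with the $\bm{h}$-sum and the $q_iq_j/h$ prefactor from Eq.~\eqref{eq::33}. So the first step is to write this tail explicitly as
\begin{equation*}
	U^{\emph{c}}-U^{\emph{c}}_M=\frac{\pi}{L_xL_y}\sum_{i,j=1}^N q_iq_j\sum_{\bm{h}}{}^\prime\frac{e^{-\i\bm{h}\cdot\bm{r}_{ij}}}{h}\sum_{l>M}\left[\gamma_+^{(l)}e^{-h|z_i-z_{j+}^{(l)}|}+\gamma_-^{(l)}e^{-h|z_i-z_{j-}^{(l)}|}\right].
\end{equation*}

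The second step is to bound each tail term. Using Eq.~\eqref{eq::31}, for $l>M\ge 1$ one has $|z_i-z_{j\pm}^{(l)}|\ge (l-1)H$ (since $z_i,z_j\in[0,H]$), so each exponential is bounded by $e^{-h(l-1)H}\le e^{-hMH}$ for the leading term $l=M+1$, with the remaining terms forming a geometric series in $e^{-2hH}$ governed by $|\gamma_{\emph{top}}\gamma_{\emph{bot}}|$; summing gives a tail of size $|\gamma_+^{(M+1)}|\,e^{-hMH}\,\zeta(h)$ up to a bounded constant, and the magnitude of the leading dielectric factor is exactly $|\gamma_{\emph{top}}|^{\lfloor (M+1)/2\rfloor}|\gamma_{\emph{bot}}|^{\lceil (M+1)/2\rceil}$ by definition of $\gamma_\pm^{(l)}$. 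Since $|\gamma_{\emph{top}}\gamma_{\emph{bot}}|e^{-2hH}<1$, the factor $\zeta(h)$ is uniformly bounded (by $(1-|\gamma_{\emph{top}}\gamma_{\emph{bot}}|)^{-1}$, or by a fixed constant once $h$ is bounded away from $0$), and can be absorbed into the $\mathcal{O}$.

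The third step is to perform the $\bm{h}$-sum. The smallest nonzero $|\bm{h}|$ is $h_{\min}=2\pi/\max\{L_x,L_y\}$, so $e^{-hMH}\le e^{-2\pi MH/\max\{L_x,L_y\}}$ on every nonzero mode; pulling this worst-case factor out of the lead term and noting that the residual sum $\sum_{\bm{h}}{}^\prime h^{-1}e^{-(h-h_{\min})MH}\cdot(\text{bounded})$ converges (the Gaussian-free exponential decay in $h$ dominates the $h^{-1}$ growth and the two-dimensional density of modes) yields a finite prefactor. Writing $MH=(M-1)H+H$ and absorbing the $e^{-2\pi H/\max\{L_x,L_y\}}$ constant into the $\mathcal{O}$ recovers exactly the stated $e^{-2\pi H(M-1)/\max\{L_x,L_y\}}$ rate. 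Finally, the double sum over $i,j$ contributes only an $N^2\max_i q_i^2$-type constant independent of $M$ and $L_z$, which is again absorbed.

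The main obstacle is the third step: one must be careful that the $\bm{h}$-summation of $h^{-1}e^{-hMH}$ over the two-dimensional reciprocal lattice is genuinely controlled by its smallest-frequency term uniformly in $M$, rather than picking up an $M$-dependent constant (e.g.\ from the $h\to 0^+$ behavior of $h^{-1}$, or from the number of near-minimal modes). This is handled by noting $h^{-1}\le h_{\min}^{-1}$ on the nonzero lattice and that $\sum_{\bm{h}\neq 0}e^{-(h-h_{\min})MH}$ is decreasing in $M$, hence bounded by its $M=1$ value; the apparent subtlety about whether larger $M$ could degrade the bound is thus resolved, and in fact the analysis confirms monotone improvement in this idealized (pre-Ewald-reformulation) estimate, which is why the counter-intuitive accuracy loss reported in~\cite{yuan2021particle} must instead originate from the interaction between $M$ and the vacuum-layer/trapezoidal errors of Theorem~\ref{thm::err}, to be addressed subsequently.
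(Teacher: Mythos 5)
Your proposal is correct and follows essentially the same route as the paper: the paper's proof of Theorem~\ref{thm::imagetrun} consists of the single remark that the estimate "can be directly obtained by Eqs.~\eqref{eq::33}--\eqref{eq::36}," i.e.\ by identifying the truncation error with the tail of the $\beta_{ij}(h)$ series in the Poisson-summed representation and reading off the leading factor $\gamma_\pm^{(M+1)}e^{-h\,MH}$ at the smallest nonzero mode $h_{\min}=2\pi/\max\{L_x,L_y\}$, which is exactly what you do. Your write-up simply supplies the details the paper omits (the lower bound $|z_i-z_{j\pm}^{(l)}|\ge (l-1)H$, the uniform bound on the geometric factor for $h\ge h_{\min}$, and the $M$-uniform control of the lattice sum), all of which check out.
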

Theorem~\ref{thm::imagetrun} provides two key insights: 1). when $|\gamma_{\text{top}}\gamma_{\text{bot}}|\ll 1$, the truncation error decreases exponentially in $M$ as $|\gamma_{\text{top}}\gamma_{\text{bot}}|^{\frac{M+1}{2}}$; 2).~the truncation error is significantly influenced by the aspect ratio of the system, i.e., $H/\max\{L_x,L_y\}$. For example, if $H\approx \max\{L_x,L_y\}$ and $|\gamma_{\text{top}}|=|\gamma_{\text{bot}}|=1$, achieving an accuracy of $10^{-5}$ generally needs $M=3$ levels of image reflections. However, if $H\ll \max\{L_x,L_y\}$, a much larger $M$ must be chosen to guarantee the same accuracy. 

In general, the number of reflection level $M$ required to achieve a desired tolerance $\varepsilon$ can be estimated by:
\begin{equation}\label{eq::38}
	M\sim \frac{2\log \varepsilon-\frac{4\pi H}{\max\{L_x,L_y\}}-\log|\gamma_{\text{top}}\gamma_{\text{bot}}|}{\log|\gamma_{\text{top}}\gamma_{\text{bot}}|-\frac{4\pi H}{\max\{L_x,L_y\}}}\;,
\end{equation}
which is a directly consequence of Eq.~\eqref{eq::37}. Eq.~\eqref{eq::38} also explains the conflicting conclusions made in literature. In~\cite{dos2015electrolytes}, a very large aspect ratio is chosen, so that $M=50$ is needed; while in~\cite{yuan2021particle}, $H=\max\{L_x,L_y\}/3$, where choosing $M= 5$ can already provide 5 digits accuracy according to Eq.~\eqref{eq::38}.  

\subsection{Efficient summation formula for dielectrically confined quasi-2D systems}
\label{subsec::IBC&ELC&ICM}

After truncating the image reflection series at $l=M$, the resulting system,demoted as  $\Omega_{\text{Q2D}}^{M}$, can be regarded as a homogeneous quasi-2D system with height $l_z=(2M+1)H$. Thus one can analogously obtain its corresponding IBC and ELC corrections as was discussed in Section~\ref{subsec::IBCELC}.

First, the energy $U^\text{c}$ can be decomposed via Ewald splitting:
\begin{equation}\label{eq::39}
	U^{\text{c}} \approx  U^{\text{c}}_{M} = U^{\text{c}}_{\text{real}} + U^{\text{c}}_{\text{Fourier}}\;,
\end{equation}
where the real space component $U^{\text{c}}_{\text{real}}$ is given by
\begin{equation}\label{eq::UcReal}
	\begin{split}
		U_{\te{real}}^{\text{c}}=&\frac{1}{2}\sum_{i,j=1}^{N}q_iq_j{\Biggg [}{\bm{n}}{}^\prime \frac{\te{erfc}(\alpha\left|\bm{r}_{ij}+\bm{\ell}\right|)}{\left|\bm{r}_{ij}+\bm{\ell}\right|}+\sum_{\bm{n}}\sum_{l=1}^{M}\frac{\gamma^{(l)}_{+}\te{erfc}\left(\alpha \left|\bm{r}_i-\bm{r}_{j+}^{(l)}+\bm{\ell}\right|\right)}{\left|\bm{r}_i-\bm{r}_{j+}^{(l)}+\bm{\ell}\right|}\\
		&+\sum_{\bm{n}}\sum_{l=1}^{M}\frac{\gamma^{(l)}_{-}\te{erfc}\left(\alpha \left|\bm{r}_i-\bm{r}_{j-}^{(l)}+\bm{\ell}\right|\right)}{\left|\bm{r}_i-\bm{r}_{j-}^{(l)}+\bm{\ell}\right|}\Biggg]\;.
	\end{split}
\end{equation}
In Eq.~\eqref{eq::UcReal}, the additional terms comparing to Eq.~\eqref{eq:ewald2d-1} represent the interaction between the actual charges and the fictitious image charges, the detailed formula for the real space component of force is given by Eq.~(SM1.3) in the Supplementary Materials (SM).
The $\V k$-space component $U^{\text{c}}_{\text{Fourier}}$ is given by
\begin{equation}\label{eq::40}
	\begin{split}
		U^{\text{c}}_{\te{Fourier}}&=\frac{1}{2\alpha L_xL_y}
		\sum_{i,j=1}^{N}q_iq_j\sum_{\V h}{}^\prime e^{\i \V h\cdot \V r_{ij}}\int_{-\infty}^{\infty}\frac{e^{-\frac{h^2}{4\alpha^2}-t^2}}{\frac{h^2}{4\alpha^2}+t^2}\Gamma_{ij}(t)dt-\frac{\alpha}{\sqrt{\pi}}\sum_{i=1}^{N}q_i^2+\mathcal{J}_0\;,
	\end{split}
\end{equation}
where
\begin{equation}
	\Gamma_{ij}(t):=e^{2\i\alpha z_{ij}t}+\sum_{l=1}^{\infty}\left[\gamma^{(l)}_{+}e^{2\i\alpha t(z_i-z_{j+}^{(l)})}+\gamma^{(l)}_{-}e^{2\i\alpha t(z_i-z_{j-}^{(l)})}\right]
\end{equation}
and the $\V 0$-th mode correction $\mathcal{J}_0$ reads
\begin{equation}
	\mathcal{J}_0=\frac{1}{2\alpha L_xL_y}\sum_{i,j=1}^{N}q_iq_j\int_{-\infty}^{\infty}\frac{e^{-t^2}\Gamma_{ij}(t)-\Gamma_{ij}(0)}{t^2}dt.
\end{equation}
Further applying the trapezoidal rule to discretize the integrals, one obtains an efficient summation formula with error estimates, summarized in Theorem~\ref{thm::FourDie}.
\begin{theorem}\label{thm::FourDie} 
	Assume $L_z>H$, discretizing the integrals in Eq.~\eqref{eq::40} via trapezoidal rule with mesh size $\pi/\alpha L_z$, one has
	\begin{equation}\label{eq::UcFour}
		U^{\emph{c}}_{\emph{Fourier}}=\frac{2\pi}{L_xL_yL_z}\sum_{\bm{k}}{}^{\prime}\frac{e^{-\frac{k^2}{4\alpha^2}}}{k^2}\rho_{\bm{k}}\bar{\rho}_{\bm{k}}^{M}-\frac{\alpha}{\sqrt{\pi}}\sum_{i=1}^{N}q_i^2+U_{\emph{IBC}}^{M}+U_{\emph{ELC}}^{M}+U_{\emph{err}}^{M},
	\end{equation}
	where $\bm{k}=2\pi(n_x/L_x,n_y/L_y,n_z/L_z)$, the structure factors $\rho_{\bm{k}}$ and $\bar{\rho}_{\bm{k}}^{M}$ are defined by
	\begin{equation}
		\rho_{\bm{k}}:=\sum_{i=1}^{N}q_ie^{\i \bm{k}\cdot \bm{r}_i}\quad\text{and}\quad\bar{\rho}_{\bm{k}}^{M}:=\sum_{j=1}^{N}q_j\left[e^{-\i \bm{k}\cdot \bm{r}_i}+\sum_{l=1}^{M}\left(\gamma_{+}^{(l)}e^{-\i \bm{k}\cdot \bm{r}_{j+}^{(l)}}+\gamma_{-}^{(l)}e^{-\i \bm{k}\cdot \bm{r}_{j-}^{(l)}}\right)\right],
	\end{equation}
	the second term on the RHS is the self-energy correction, and
	\begin{equation}
		U_{\emph{IBC}}^{M}:=\frac{2\pi}{L_xL_yL_z}\left(\sum_{i=1}^{N}q_{i}z_{i}\right)\sum_{j=1}^{N}q_j\left[z_{j}+\sum_{l=1}^{M}\left(\gamma_{+}^{(l)}z_{j+}^{(l)}+\gamma_{-}^{(l)}z_{j-}^{(l)}\right)\right]\;,
	\end{equation}
	\begin{equation}\label{eq::46}
		U_{\emph{ELC}}^{M}:=\frac{2\pi}{L_xL_y}\sum_{i,j=1}^{N}q_iq_j\sum_{\bm{h}}{}^\prime \frac{e^{\i \bm{h}\cdot\bm{r}_{ij}}}{h}\frac{\cosh(hz_{ij})+\mathcal{G}_{ij}(h)}{1-e^{hL_z}}\;
	\end{equation}
	are the corresponding IBC and ELC correction terms with
	\begin{equation}
		\mathcal{G}_{ij}(h):=\sum\limits_{l=1}^{M}\left[\gamma_{+}^{(l)}\cosh(h(z_i-z_{j+}^{(l)}))+\gamma_{-}^{(l)}\cosh(h(z_i-z_{j-}^{(l)}))\right],
	\end{equation}
	and $U_{\emph{err}}^{M}$ is the remainder error term. 
	The last two terms have decay rates
	\begin{equation}\label{eq::48}
		\left|U_{\emph{ELC}}^{M}\right|\sim \mathcal{O}\left(e^{-\frac{2\pi(L_z-H)}{\max\{L_x,L_y\}}}+\sum_{l=1}^{M}\left[\gamma_{+}^{(l)}+\gamma_{-}^{(l)}\right]e^{-\frac{2\pi(L_z-(l+1)H)}{\max\{L_x,L_y\}}}\right)
	\end{equation}
	and
	\begin{equation}\label{eq::49}
		\left|U_{\emph{err}}^{M}\right|\sim \mathcal{O}\left(e^{-\alpha^2(L_z-H)^2}+\sum_{l=1}^{M}\left(\gamma_{+}^{(l)}+\gamma_{-}^{(l)}\right)e^{-\alpha^2(L_z-(l+1)H)^2}\right)
	\end{equation}
	respectively.
\end{theorem}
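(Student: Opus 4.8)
The plan is to follow the proof of Theorem~\ref{thm::err} almost verbatim, exploiting linearity to reduce the dielectric case to a finite superposition of homogeneous ones. After truncation at level $M$, the integrand in Eq.~\eqref{eq::40} is the finite linear combination
$\Gamma_{ij}(t)=e^{2\i\alpha z_{ij}t}+\sum_{l=1}^{M}\left[\gamma_{+}^{(l)}e^{2\i\alpha(z_i-z_{j+}^{(l)})t}+\gamma_{-}^{(l)}e^{2\i\alpha(z_i-z_{j-}^{(l)})t}\right]$,
i.e.\ a sum of $2M+1$ terms, each of the shape (constant)$\times e^{2\i\alpha\delta t}$ with $\delta$ ranging over the ``effective separations'' $z_{ij}$ and $z_i-z_{j\pm}^{(l)}$. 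Since the $\V h$-space integral and the $\mathcal J_0$ integral in Eq.~\eqref{eq::40} are linear in the integrand, I would apply to each term separately the same trapezoidal-rule identity derived in Appendix~\ref{app::trapezoidal} that underlies Eq.~\eqref{eq::17}, with $z_{ij}$ replaced by the relevant $\delta$; the identities~\eqref{eq:int-identity1}--\eqref{eq:int-identity2} handle the $\V h\neq\V 0$ and $\V h=\V 0$ pieces respectively, exactly as before.

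Next I would collect the pieces. The $\delta=z_{ij}$ term reproduces verbatim the homogeneous output of Theorem~\ref{thm::err}: the Ewald3D reciprocal sum, the self-energy $-\frac{\alpha}{\sqrt\pi}\sum_i q_i^2$, the $z$-dipole contribution to $U_{\te{IBC}}^{M}$, and the $\cosh(hz_{ij})$ part of the ELC kernel. Each image term $\delta=z_i-z_{j\pm}^{(l)}$ with weight $\gamma_{\pm}^{(l)}$ contributes, after summing over $i,j$ and over $l$: to the reciprocal sum a factor $e^{\i\bm k\cdot\bm r_i}\gamma_{\pm}^{(l)}e^{-\i\bm k\cdot\bm r_{j\pm}^{(l)}}$, which assembles $\rho_{\bm k}$ against $\bar\rho_{\bm k}^{M}$ (the factorization is immediate once one notes $\bm r_{j\pm}^{(l)}$ shares the $xy$-coordinates of $\bm r_j$); to the $k_z\to0$ limit of the zero mode the term $\gamma_{\pm}^{(l)}z_{j\pm}^{(l)}$ sitting next to $z_i$, producing $U_{\te{IBC}}^{M}$; and to the ELC kernel the term $\gamma_{\pm}^{(l)}\cosh(h(z_i-z_{j\pm}^{(l)}))$, i.e.\ precisely $\mathcal G_{ij}(h)$. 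Assembling these gives Eq.~\eqref{eq::UcFour}.

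For the decay rates I would use the elementary bound $|z_i-z_{j\pm}^{(l)}|\le (l+1)H$, valid whenever $z_i,z_j\in[0,H]$, which follows from Eq.~\eqref{eq::31} by a short case check on the parity of $l$ (even $l=2m$ puts the image in $[2mH,(2m+1)H]$, odd $l=2m+1$ in $[(2m+1)H,(2m+2)H]$). As in Eq.~\eqref{eq::17}, the trapezoidal remainder attached to the $\delta$-term behaves like $e^{-\alpha^2(L_z-|\delta|)^2}$, so the $z_{ij}$ piece yields $e^{-\alpha^2(L_z-H)^2}$ and the $l$-th image pieces yield $\gamma_{\pm}^{(l)}e^{-\alpha^2(L_z-(l+1)H)^2}$, which is Eq.~\eqref{eq::49}; likewise the ELC kernel $\cosh(h\delta)/(1-e^{hL_z})$ decays, after summation over the reciprocal lattice $\bm h$, like $e^{-2\pi(L_z-|\delta|)/\max\{L_x,L_y\}}$, giving Eq.~\eqref{eq::48} (these bounds being informative in the regime $L_z>(M+1)H$, which is precisely the regime in which the vacuum layer is chosen).

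The only genuinely new point relative to Theorem~\ref{thm::err} is bookkeeping: verifying that the double sum over $i,j$ of the per-pair reciprocal contributions collapses cleanly into $\rho_{\bm k}\bar\rho_{\bm k}^{M}$, and tracking the effective layer height $(l+1)H$ uniformly in the parity of $l$ through the ceil/floor functions in Eq.~\eqref{eq::31}. I expect this to be the main (but mild) obstacle; none of it requires new analytic input, since Appendix~\ref{app::trapezoidal} already supplies the quantitative trapezoidal-rule estimate and every other step is a term-by-term copy of the homogeneous argument.
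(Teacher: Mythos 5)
Your proposal is correct and follows essentially the same route as the paper's own (very terse) proof: decompose $\Gamma_{ij}(t)$ by linearity into $2M+1$ exponential terms, apply the Appendix~\ref{app::trapezoidal} trapezoidal-rule analysis to each with the effective separation $\delta=z_i-z_{j\pm}^{(l)}$ in place of $z_{ij}$, and read off the decay rates from $|\delta|\le(l+1)H$ together with Eq.~\eqref{eq::A.8}. The bookkeeping you identify (collapsing the double sum into $\rho_{\bm k}\bar\rho_{\bm k}^{M}$ and tracking the parity of $l$) is exactly what the paper leaves implicit, and your treatment of it is sound.
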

\begin{proof}
	Analogous to the proof of Theorem~\ref{thm::err},
	by error analysis for the trapezoidal rule as outlined in Appendix~\ref{app::trapezoidal}, it can be shown that the quadrature discretization of Eq.~\eqref{eq::40} directly yields the first two terms in Eq.~\eqref{eq::UcFour}, as well as the residual correction associated with the ELC term. The estimates for both the ELC term and the remainder error term can be derived directly from Eqs.~\eqref{eq::46} and \eqref{eq::A.8}.
\end{proof}
It should be noted that, unlike the homogeneous case, here the first two terms on the RHS are no longer identical to that of the standard Ewald3D summation, since now the charged system consists of both actually charges and fictitious image charges.


Finally, based on Theorem~\ref{thm::FourDie}, we obtain the following criterion for selecting appropriate values of $M$ and $L_z$ to ensure that the contributions from the image truncation error, the ELC term, and the remainder error term are all below a specified tolerance $\varepsilon$:
\begin{itemize}
	\item First, choose the image reflection truncation level $M$ according to Eq.~\eqref{eq::38}, such that $|U^{\text{c}}-U^{\text{c}}_{M}|\leq \varepsilon$;
	\item Second, choose the extended height of the domain $L_z$  according to  Eqs.~\eqref{eq::48}-\eqref{eq::49}, such that both $\left|U_{\text{ELC}}^{M}\right|\leq \varepsilon$ and $\left|U_{\text{err}}^{M}\right|\leq \varepsilon$ are satisfied.
\end{itemize}
In the worst scenario where $|\gamma_{\text{top}}|=|\gamma_{\text{bot}}|=1$, the decay rates of $|U_{\text{ELC}}^{M}|$ and $|U_{\text{err}}^{M}|$ are solely determined by the exponential factors, as evident from Eqs.~\eqref{eq::48} and \eqref{eq::49}. It is found that $L_z$ can be chosen according to
\begin{equation}\label{eq::LzM}
	L_z\geq (M+1)H+\max\left\{\frac{\max\{L_x,L_y\}}{2\pi}\log\frac{1}{\varepsilon},~\frac{1}{\alpha}\sqrt{\log\frac{1}{\varepsilon}}\right\}\;.
\end{equation}
Particularly, it is observed that $L_z$ needs to be larger than $(M+1)H$, 
this can be understood as a physical constraint which prevents the $z$-replicas of actual charges overlapping with their image charges. 
Eq.~\eqref{eq::LzM} also explains the counter-intuitive convergence result in~\cite{yuan2021particle}, where it was observed that increasing $M$ (while keeping $L_z$ fixed) may lead to a non-monotonic behavior in accuracy, i.e., the error first decays as $M$ increases; but eventually grows exponentially when $M$ is large. 
This can now be understood that if one keeps increase $M$ but with $L_z$ fixed, then the constraint $L_z\geq (M+1)H$ will eventually be violated, then the approximation is no longer stable.

\subsection{RBE2D method for dielectrically confined quasi-2D systems} \label{subsec::IBCELCDielectric}

We now extend the RBE2D method for efficient MD simulations of dielectrically confined quasi-2D Coulomb systems. 
First, given tolerance $\varepsilon$, one shall choose appropriate values for $M$ and $L_z$, such that the image series truncation error, the ELC term, and remainder error term can all be neglected. Then, let $\bm{F}_{\text{Fourier}}^{\text{c}}$ denotes the $\V k$-space component of force acting on the $i$-th particle, it reads
\begin{equation}\label{eq::52}
	\bm{F}_{\text{Fourier}}^{\text{c}}(\bm{r}_i)=-\frac{2\pi}{L_xL_yL_z}\sum_{\bm{k}}{}^{\prime}\frac{e^{-\frac{k^2}{4\alpha^2}}}{k^2}\nabla_{\bm{r}_i}\left[\rho_{\bm{k}}\bar{\rho}_{\bm{k}}^{M}\right]+\bm{F}_{\text{IBC}}^{M}(\bm{r}_i)\;,
\end{equation}
where 
\begin{equation}\label{eq::rhorhoM}
	\begin{split}
		\nabla_{\bm{r}_i}\left[\rho_{\bm{k}}\bar{\rho}_{\bm{k}}^{M}\right]=&q_i\bm{k}\text{Im}\left[e^{-\i\bm{k}\cdot\bm{r}_i}\left(\rho_{\bm{k}}+\rho_{\bm{k}}^{M}\right)+\sum_{\substack{l=1\\ \text{even}}}^{M}\left(\gamma_{+}^{(l)}e^{-\i\bm{k}\cdot\bm{r}_{i+}^{(l)}}+\gamma_{-}^{(l)}e^{-\i\bm{k}\cdot\bm{r}_{i-}^{(l)}}\right)\rho_{\bm{k}}\right]\\
		+&q_i\widehat{\bm{k}}\text{Im}\left[\sum_{\substack{l=1\\ \text{odd}}}^{M}\left(\gamma_{+}^{(l)}e^{-\i\bm{k}\cdot\bm{r}_{i+}^{(l)}}+\gamma_{-}^{(l)}e^{-\i\bm{k}\cdot\bm{r}_{i-}^{(l)}}\right)\rho_{\bm{k}}\right]
	\end{split}
\end{equation}
with $\rho_{\bm{k}}^{M}$ the conjugate of $\bar{\rho}_{\bm{k}}^{M}$ and $\widehat{\bm{k}}=(k_x,k_y,-k_z)$. The calculation of Eq.~\eqref{eq::52} can also be accelerated via FFT, namely, the ICM-PPPM method~\cite{yuan2021particle}. 
If the image reflection level is truncated at $M$, the computational cost of ICM-PPPM is $\mathcal O(MN \log(MN))$. Thus for strongly confined systems, the cost of FFT-based method will increase rapidly as $M$ increases.

We now again apply the importance sampling technique to approximate $\bm{F}_{\text{Fourier}}^{\text{c}}$, yielding the following stochastic estimator as 
\begin{equation}\label{eq::important}
	\bm{F}_{\text{Fourier}}^{\text{c}}(\bm{r}_i)\approx \bm{F}_{\text{Fourier}}^{\text{c},*}(\bm{r}_i)=-\frac{2\pi }{L_xL_yL_z}\sum_{\ell=1}^{P}\frac{S}{P}\frac{\nabla_{\bm{r}_i}\left[\rho_{\bm{k}_{\ell}}\bar{\rho}_{\bm{k}_{\ell}}^{M}\right]}{k_{\ell}^2}+\bm{F}_{\text{IBC}}^{M}(\bm{r}_i),
\end{equation}
where $\{\bm{k}_{\ell}\}_{\ell=1}^{P}$ are the $P$ batches of wave vectors sampled from $\mathscr{P}(\bm{k})$. 
Note that if surface charges are present on the interfaces, a simple modification for Eq.~\eqref{eq::important} is needed, detailed information can be found in the SM, Section S1. And the real space component of force is given by Eq.~(
SM1.3), which can still be evaluated by neighbor lists algorithms.


Next, we describe a simple strategy for efficiently summing over the $2MN$ image charges.
Notice that Eq.~\eqref{eq::rhorhoM} can be rewritten as
\begin{equation}\label{eq::56}
	\begin{split}
		\nabla_{\bm{r}_i}\left[\rho_{\bm{k}}\bar{\rho}_{\bm{k}}^{M}\right]
		= & q_i\bm{k}\text{Im}\left[e^{-\i \bm{k}\cdot \bm{r}_i}\sum_{j=1}^{N}q_je^{\i\bm{k}\cdot\bm{r}_{j}}\left(1+e^{-2\i k_zz_j}Y_{\text{odd}}(k_z)+Y_{\text{even}}(k_z)\right)\right]\\
		+ & q_i\bm{k}\text{Im}\left[e^{-\i \bm{k}\cdot\bm{r}_i}\left(1+e^{2\i k_zz_i}\overline{Y}_{\text{odd}}(k_z)\right)\rho_{\bm{k}}\right]+q_{i}\widehat{\bm{k}}\text{Im}\left[e^{-\i\bm{k}\cdot\bm{r}_i}\overline{Y}_{\text{even}}(k_z)\rho_{\bm{k}}\right],
	\end{split}
\end{equation}
where the coefficients $Y_{\text{odd}}$ and $Y_{\text{even}}$ are defined by $\sum_{\substack{l=1, \text{odd}}}^{M}\left[\gamma_{+}^{(l)}e^{\i k_z(l+1) H}+\gamma_{-}^{(l)}e^{-\i k_z (l-1)H}\right]$ and $\sum_{\substack{l=1, \text{even}}}^{M}\left[\gamma_{+}^{(l)}e^{\i k_zl H}+\gamma_{-}^{(l)}e^{-\i k_z l H}\right]$, respectively. Clearly, now the summation over~$j$ and~$l$ are decoupled.
In a simulation, one shall precompute $Y_{\text{odd}}(k_z)$ and $Y_{\text{even}}(k_z)$ for each $\bm{k}\in \{\bm{k}_{\ell}\}_{\ell=1}^{P}$, and subsequently given a particle configuration, evaluate $\nabla_{\bm{r}_i}\left[\rho_{\bm{k}}\bar{\rho}_{\bm{k}}^{M}\right]$ for all $i$ based on Eq.~\eqref{eq::56}. 
As such, the total computational cost in force evalulations is reduced to $\mathcal{O}(P(M+N))$.
In practice, one shall still take $\alpha \sim \rho_{r}^{1/3}$ to ensure that the real space calculation costs $\mathcal{O}(N)$, then the $\V k$-space components can be calculated according to Eq.~\eqref{eq::56} so that the complexity is reduced to $\mathcal{O}(P(M+N))$.
Thus, the overall complexity of the RBE2D is approximately $\mathcal{O}(N)$ since both $P$ and $M$ are of $\mathcal O(1)$ and independent with $N$.
The detailed algorithm for RBE2D accelerated MD in the presence of dielectric interfaces is summarized in Algorithm~\ref{alg::RBEalg}.

\begin{algorithm}[H]
	\caption{(RBE2D for quasi-2D systems under dielectric confinement)}\label{alg::RBEalg}
	\begin{algorithmic}[1]
		\State Choose $\alpha$, $r_c$ (the cutoff in real space), $\Delta t$, batch size $P$, image reflection level of truncation $M$, and the vacuum layer parameter $L_z$. Initialize the coefficients of dielectric jumps ($\varepsilon_{\rm{top}}$, $\varepsilon_{\rm{c}}$, $\varepsilon_{\rm{bot}}$) and the positions and velocities of charges $\bm{r}^0_i, \bm{v}^0_i$ for $1\le i\le N$. Set $N_T$ as the total MD time steps.
		\For {$n \text{ in } 1: N_T$}
		\State Sample $P$ nonzero frequencies $\bm{k}\sim e^{-k^2/(4\alpha)}$ by the Metropolis procedure to form set $\mathcal{K}$.
		\State The real space component of force is computed via a neighbor list algorithm (for both actual charges and image charges within $r_c$).
		\State The Fourier space component of force is computed via the random batch approximation $\bm{F}_{\text{Fourier}}^{\text{c},*}$, with $P$ frequencies sampled from $\mathcal{K}$.
		\State Integrate the equations of motions for time $\Delta t$ with appropriate integration scheme and some appropriate thermostat. 
		\EndFor
	\end{algorithmic}
\end{algorithm}

Finally, consider the fluctuation of random batch approximation of the force, denoted as $\bm{\Xi}_{\bm{F},i}^{\text{c}}=\bm{F}_{\text{Fourier}}^{\text{c},*}(\bm{r}_i)-\bm{F}_{\text{Fourier}}^{\text{c}}(\bm{r}_i)$, one can still show that 1) the variance of the RBE2D method is controlled via $\mathcal{O}(1/P)$ and is independent of $L_z$; 2) the RBE2D is capable to capture finite time structure and dynamic properties in MD simulations. 
The proof is similar to Theorems~\ref{Thm::1} and~\ref{thm::2}, and is thus omitted here.



\section{Numerical results on quasi-2D systems without dielectric interfaces}\label{sec::numerical}

In this section, numerical results are first presented for the homogeneous case, validating the performance of RBE2D accelerated MD simulations. 
Examples include coarse-grained model of electrolytes and all-atom SPC/E bulk water systems~\cite{berendsen1987missing} confined by two slabs without dielectric jumps. 
Our method is implemented in LAMMPS (version 29Oct2020) \cite{thompson2021lammps} and performed on the ``Siyuan Mark-I'' high performance cluster at Shanghai Jiao Tong University. 
Each node is equipped with $2\times$ Intel Xeon ICX Platinum $8358$ CPU ($2.6$GHz, $32$ cores) and $512$ GB memory.
The code is highly optimized, where message passing interface (MPI) and Intel AVX-512 instructions are used for parallelization and vectorization, respectively.

To benchmark the accuracy, several widely investigated quantities are computed, such as concentration, mean-squared displacement (MSD), velocity auto-correlation functions (VACF), ensemble-averaged energy distribution, etc. 
The calculation methods for these quantities are described in Section~SM2 of the SM. 
For comparison, we used the stable implementation of the PPPM2D method~\cite{crozier2001molecular} provided inside LAMMPS. 
Results obtained using the PPPM2D method are labeled as ``PPPM''. 
The parameters of the PPPM were automatically chosen in LAMMPS to achieve a desired relative error of $\Delta=10^{-4}$ for force evaluations~\cite{deserno1998mesh}. 
As a fair comparison, the Ewald splitting parameter $\alpha$ is always set to be the same for RBE2D and PPPM methods.

\subsection{Accuracy test case I: 3:1 electrolytes immersed in an implicit solvent}\label{subsec::electrolyte-neutral}
We first perform coarse-grained MD simulations of $3:1$ electrolytes in the NVT ensemble. Following the primitive model, all the ions are represented as soft spheres of diameter $\sigma$ and mass $m$ that interact via a purely repulsive shifted-truncated Lennard-Jones (LJ) potential
\begin{equation}
	U_{\text{LJ}}=\begin{cases}
		4\varepsilon_{\text{LJ}}\left[\left(\dfrac{\sigma}{r_{ij}}\right)^{12}-\left(\dfrac{\sigma}{r_{ij}}\right)^6+\dfrac{1}{4}\right],~~~~r_{ij}\leq r_{\text{LJ}},\\
		0,~~~~r_{ij}>r_{\text{LJ}},	
	\end{cases}
\end{equation}
where $r_{ij}$ is the distance between $i$-th and $j$-th particles, $r_{\text{LJ}}=2^{1/6}\sigma$ the LJ truncation distance, $\varepsilon_{\text{LJ}}=k_{\text{B}}T$ the coupling strength, $k_{\text{B}}$ is the Boltzmann constant, and $T$ the temperature. These ions are immersed in a continuum solvent, characterized by a Bjerrum length 
$\ell_{\text{B}}=e^2/(4\pi \varepsilon_c k_{\text{B}}T)=3.5\sigma$. The system has dimensions $L_x=90\sigma$, $L_y=90\sigma$, and $H=30\sigma$, containing $750$ cations and $2250$ anions. The ions are confined in $z$ by purely repulsive LJ walls at $z=0$ and $z=30\sigma$, with parameters $\varepsilon_{\text{wall}}=\varepsilon_{\text{LJ}}$ and $\sigma_{i,\text{wall}}=0.5\sigma$. 
The initial $10^6$ steps with $\Delta t=10^{-3}\tau$ are performed for equilibrium, where $\tau=\sqrt{m\sigma^2/\varepsilon_{\text{LJ}}}$ denotes the LJ unit of time. The production phase follows for another $10^7$ steps, and the configurations are sampled every $100$ time steps for statistics. 
The temperature is maintained by using a Nos\'e-Hoover thermostat~\cite{hoover1985canonical} with relaxation times $0.01\tau$, 
and $r_c$ is set as $10\sigma$ for both the RBE2D and PPPM methods.  

\begin{figure}[!ht]
	\centering
	\includegraphics[width=0.90\linewidth]{./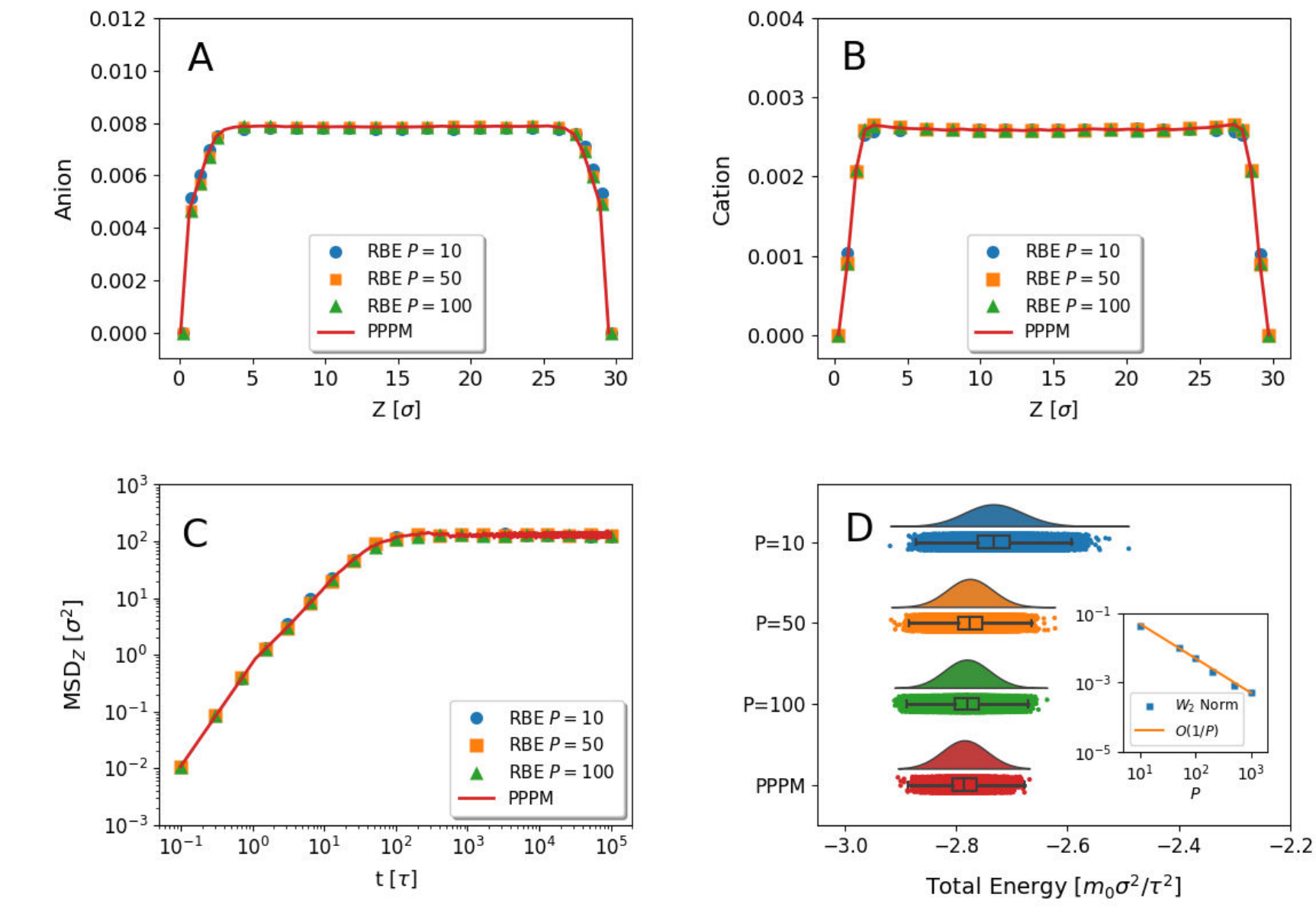}
	\caption{MD results for $3:1$ electrolytes. (a-b): The variance of force and the error on ensemble average of the potential energy as a function of the relative size of the extended system, $L_z/H$. (c): The concentration of cations along $z$-dimension. (d): The raincloud plots of the ensemble average distribution of the potential energy as well as a boxplot and raw data-points. In (a), (c) and (d), results are shown for the RBE2D with different batch size $P$ and the PPPM. In (b), we use the $W_2$ norm to measure the difference between two distributions. The inset in (d) shows the convergence on the $W_2$ norm with $\mathcal{O}(1/P)$ rate.}
	\label{fig:3_1}
\end{figure}

First, let us consider the ensemble average of the variance in forces, denoted as $\langle\|\mathbb{E}|\bm{\Xi}_i|^2\|_{\infty}\rangle$,
results are shown in Fig.~\ref{fig:3_1}(a). 
Clearly, the force variance is independent with the vacuum layer thickness parameter $L_z$, which validates our theoretical result stated in Theorem~\ref{Thm::1}. 
The distribution of potential energy $\mathscr{P}(U)$, is also evaluated.
We compared the sampled distribution by both the RBE2D and PPPM, denoted as~$\mathscr{P}_{r}$ and $\mathscr{P}_{p}$, respectively.
To measure the difference between two probability distributions, we consider the Wasserstein-2 norm~\cite{santambrogio2015optimal}, defined as:
\begin{equation}
	W_2(\mathscr{P}_{r}, \mathscr{P}_{p})=\left(\inf _{\gamma \in \Pi(\mathscr{P}_{r}, \mathscr{P}_{p})} \int_{\mathbb{R} \times \mathbb{R}}|x - y|^2 d \gamma\right)^{1 / 2}\;,
\end{equation}
where $\Pi(\mathscr{P}_{r}, \mathscr{P}_{p})$ represents the set of all joint distributions with marginal distributions $\mathscr{P}_{r}$ and $\mathscr{P}_{p}$, respectively. 
The corresponding results are shown Fig.~\ref{fig:3_1}(b), where the~$W_2$ norms are poltted as functions of~$L_z / H$ with varies batch size~$P$.
According to Theorem~\ref{thm::err}, when $L_z\approx H$, the errors coming from the ELC and remainder error term are still large and can not be ignored; which is consistent with the error decay observed here. 
We also find that, as $L_z\geq 3H$, the error no longer decreases as $L_z/H$ increases (with $P$ fixed); indicating that
both ELC and remainder error terms now indeed become negligible, and the error primarily comes from the random batch approximation. 

Finally, the concentration of trivalent ions along $z$ and the distribution of energy are investigated.
The corresponding results are depicted in Fig.~\ref{fig:3_1}(c-d). 
We observe a convergence rate of $\mathcal{O}(1/P)$ in the average energy by the RBE2D (with that of the PPPM set as benchmark value), 
which is consistent with our theoretical prediction Theorem~\ref{Thm::1}.
It is also noticed that, for the RBE2D method, choosing $P=50$ can already yield very accurate results, almost indistinguishable from those obtained via PPPM. 

\subsection{Accuracy test case II: all-atom SPC/E bulk water system}
\label{sec::waterhomo}

Next, we perform MD simulations for the SPC/E bulk water system~\cite{berendsen1987missing}. 
The system has dimensions $L_x=L_y=H=81.9~\mathring{\text{A}}$, and consists of $53367$ atoms. 
Two purely repulsive shifted-truncated LJ walls (with $\varepsilon_{\text{atom-wall}}= k_{\text{B}}T)$ are located at $z=0$ and $z=H$. 
The simulations utilize a time step $\Delta t=1fs$, and $T$ is fixed at $298K$ by a NH thermostat with relaxation time $10fs$. 
Equilibration proceeds for $10^5$ time steps, followed by a $10^6$ steps production period, with sampling made every $100$ time steps. 
The cutoff is set as $r_c=10\mathring{A}$, and the ratio $L_z/H$ is fixed as $3$ to ensure that both the ELC and remainder error term are negligible.

\begin{figure}[ht!]
	\centering
	\includegraphics[width=0.90\linewidth]{./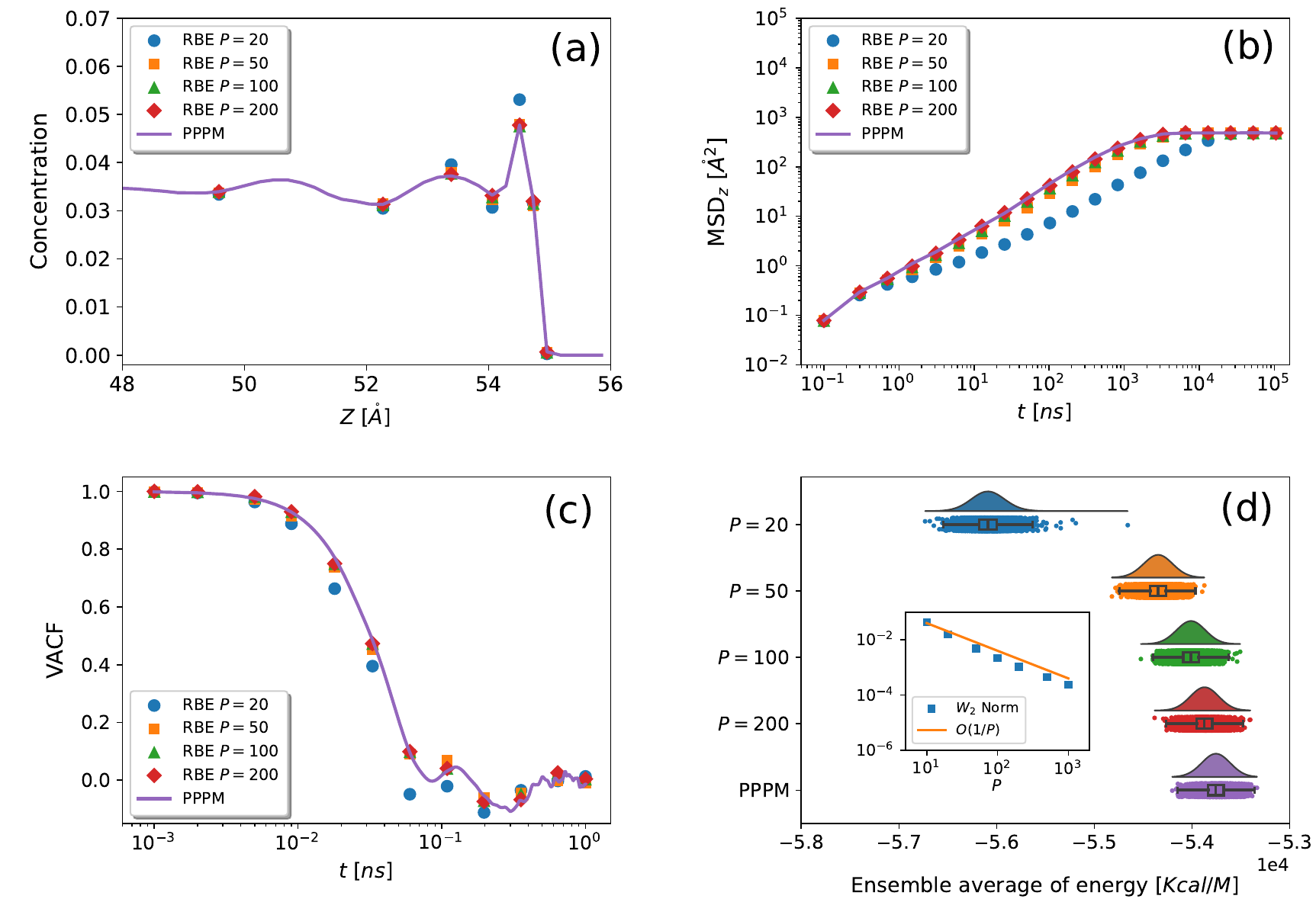}
	\caption{MD results for a SPC/E bulk water system. (a) The concentration of oxygen near the top surface; (b) the MSD along $z$-direction; (c) the VACF; and (d) the raincloud plots of the ensemble average distribution of the potential energy as well as a boxplot and raw data-points. The RBE2D with different batch size $P$ and the PPPM are plotted. The inset in (d) shows the convergence on the $W_2$ norm with $\mathcal{O}(1/P)$ rate.}
	\label{fig:Water}
\end{figure}

We analyze various equilibrium and dynamic properties of the system, including the oxygen concentration, the MSD in $z$, the VACF, and the distribution of potential energy $U$.
The results are summarized in Fig.~\ref{fig:Water}(a-d).
Remarkably, both the RBE2D and PPPM methods produce highly consistent results for all these properties when $P\geq 100$,
demonstrating the capability of RBE2D in capturing the spatiotemporal information across multiple scales in MD simulations.

\subsection{CPU time performance}
We now report the CPU time performance of the RBE2D method, compared to the PPPM method. 
The SPC/E bulk water systems are simulated, where
the system dimensions are set as $L_x=L_y=H$,  and the system size changes as one varies $N$ with the density of water being fixed at $1g/cm^3$. For a fair comparison, a relative error threshold of $10^{-4}$ is set for both methods. 

\begin{figure}[ht!]
	\centering	\includegraphics[width=0.9\linewidth]{./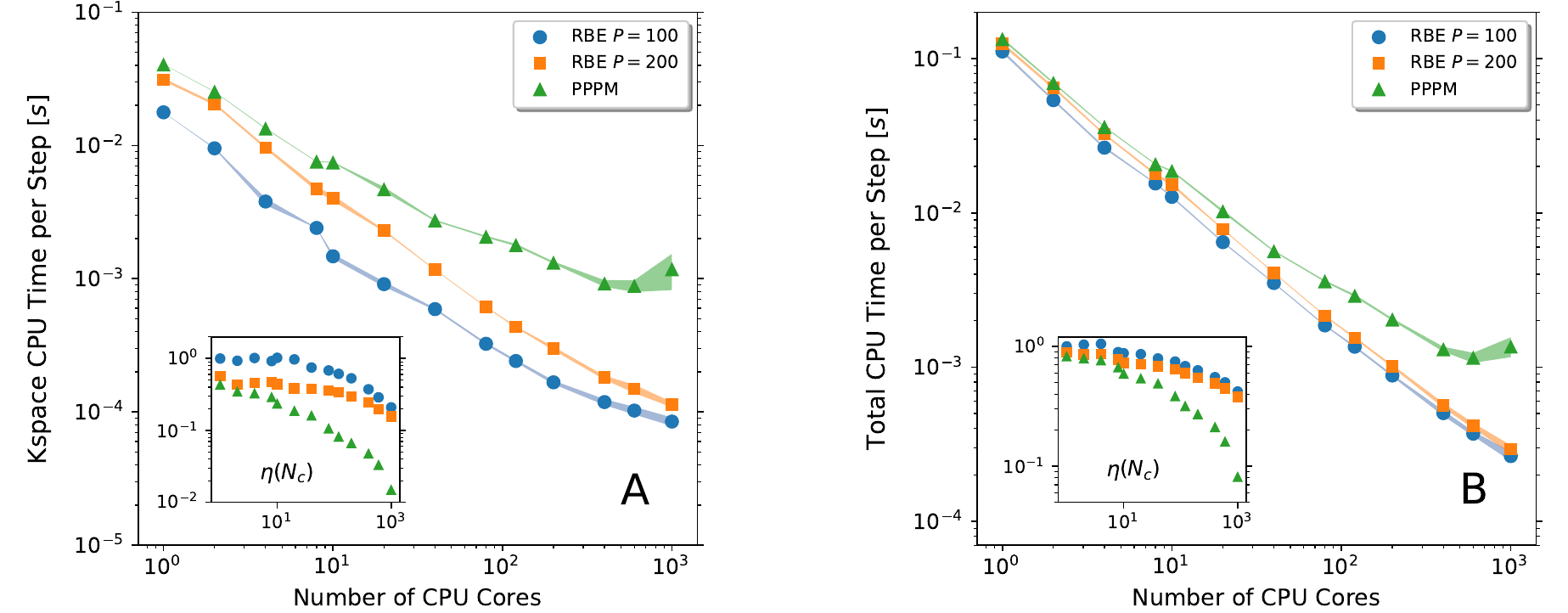}
	\caption{
		(a) CPU time per step for the RBE2D method with increasing $N$ and (b) total CPU time for comparison between the RBE2D and the PPPM with the number of CPU cores $N_{\text{cpu}}$ up to $1024$. (a) is the results with $64$ cores and $P=100$, where the solid lines indicate linear-scaling. In (b), data are shown for the RBE2D with batch size $P=100$ and $P=200$ and the PPPM, where the light-colored parts stand for the range of error bar. The inset in (b) shows the strong scalability $\eta(N_{\text{cpu}})$.}
	\label{fig:Time}
\end{figure}

We first investigate the time cost and complexity by varying particle number $N$. 
In Fig.~\ref{fig:Time}(a), 
the  CPU time per MD step for the RBE2D is documented with particle number $N$ up to $2\times 10^6$, where the solid lines indicate linear fitting of the data.
The results clearly validate the $\mathcal{O}(N)$ complexity of the RBE2D method. 
For FFT-based methods, the CPU time of the real space and the Fourier space computations are balanced to be roughly the same. 
Here we clearly observe that RBE2D has significantly reduced the CPU time cost in the Fourier part over the whole range of $N$. 

Next, we directly compare the RBE2D and PPPM methods, in terms of both
CPU time performance and scalability. 
Let $N_{\text{cpu}}$ be the utilized CPU number, and $T(N_{\text{cpu}})$ the corresponding CPU time,
the so-called ``strong scaling'' is defined as
\begin{equation}\label{eq::etau}
	\eta(N_{\text{cpu}})=\frac{N_{\text{min}}}{N_{\text{cpu}}}\frac{T_{\text{min}}}{T(N_{\text{cpu}})},
\end{equation}
where $T_{\text{min}}=T(N_{\text{min}})$ and $N_{\text{min}}$ is the minimal number of CPU used. 
Fig.~\ref{fig:Time}(b) and its inset document the CPU time and strong scaling results by the RBE2D and PPPM methods, for a system comprising $139968$ atoms. Clearly,
the two methods perform similarly when $N_{\text{cpu}}$ is small; and RBE2D outperforms PPPM as $N_{\text{cpu}}$ increases, the advantage becomes significant for $N_{\text{cpu}}>64$. Notably,
when $N_{\text{cpu}}\geq 1024$, RBE2D is approximately 10 times faster than PPPM. 
Furthermore, the strong scaling of the RBE2D remains over $70\%$ when~$\sim 4000$ CPUs are employed, while that of the PPPM drops raplidly to $6\%$. 
This highlights the excellent parallel scalability of the RBE2D method. 

\section{Numerical results on quasi-2D systems under dielectric confinement}\label{sec::numericalDielectric}

In this section, numerical simulations are performed for systems in the presence of dielectric interfaces, including dielectrically confined electrolytes and SPC/E bulk water, to further validate the RBE2D method. 
As a benchmark for accuracy and efficiency, the simulations are also  conducted using the HSMA method~\cite{liang2020harmonic} and the ICM-PPPM method~\cite{yuan2021particle}, both have been implemented in LAMMPS~\cite{liang2022hsma}. 
For the RBE2D, parameters such as $M$ and $L_z$ are selected according to Section~\ref{subsec::IBC&ELC&ICM}; while
for the HSMA and ICM-PPPM, we use the same parameters suggested by Refs.~\cite{liang2020harmonic} and~\cite{yuan2021particle}, respectively. For all methods, the threshold in  relative errors is set to $10^{-4}$.

\subsection{Accuracy test case III: dielectrically confined electrolytes}

Here we examine four distinct electrolyte systems that are confined by two dielectric interfaces.
These systems include 2:1 and 3:1 electrolytes with varying surface charge densities and dielectric contrasts.
In these systems, all ions are modeled as soft spheres of diameter $r_{\text{d}}$, and are confined by purely repulsive shifted-truncated LJ walls ($\varepsilon_{\text{ion-wall}}=k_{\text{B}}T$; $\sigma_{\text{ion-wall}}=0.5r_{\text{d}}$) at $z=0$ and $z=H$. More detailed information regarding the system settings can be found in Table~SM1 in the SM. 
The MD simulations utilize a time step $0.005\tau$, with temperature controlled by a NH thermostat 
with a damping time of $0.05\tau$. 
All simulations start with an equilibration period of $10^6$ time steps, followed by a production period of $10^8$ time steps.

\begin{figure}[ht!]
	\centering
	\includegraphics[width=0.95\linewidth]{./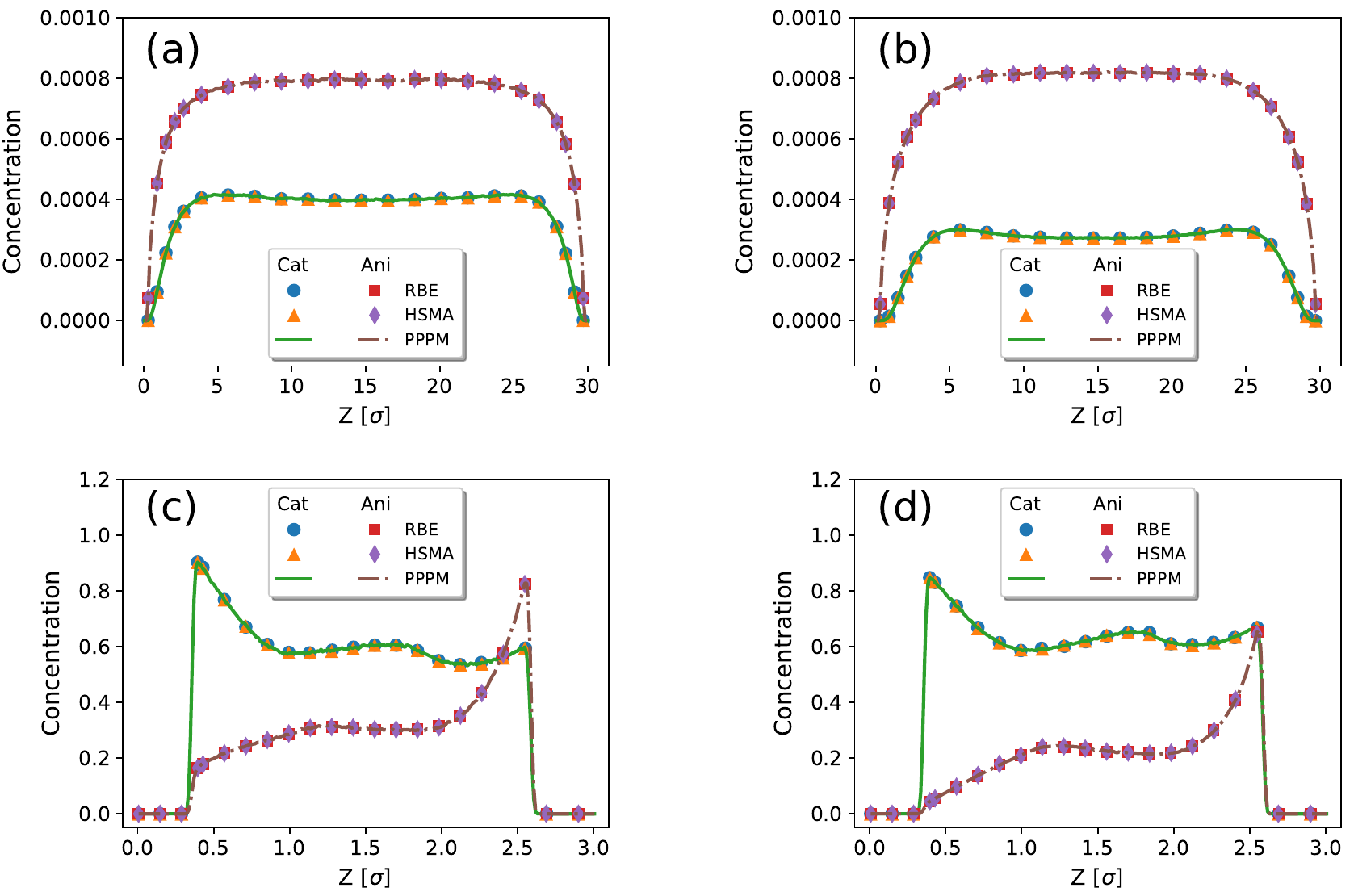}
	\caption{Ionic distributions for [(a) $2:1$ electrolyte and (b) $3:1$ electrolyte] between neutral interfaces with symmetric dielectric contrast, and [(c) $2:1$ electrolyte and (d) $3:1$ electrolyte] confined between walls with asymmetric dielectric contrasts and non-neutral asymmetric surface charge densities. More details about the system settings are provided in Tab.~SM1 in the SM. Data are shown for the RBE2D method with batch size $P=100$, and the HSMA and the ICM-PPPM methods with relative error threshold $10^{-4}$.} 
	\label{fig:den1}
\end{figure}

We first calculate the cation and anion distributions along $z$-axis for all considered systems, the results are summarized in Figure~\ref{fig:den1}(a-d). 
In panels (a-b), where the dielectric contrasts are set as $\gamma_{\text{top}}=\gamma_{\text{bot}}=0.939$, the concentrations illustrate the so-called ``ion depletion effect'' near the insulator-like interfaces, consistent with previous findings. 
Panels (c-d) demonstrate the complicated interplay between the dielectric confinement effect, interfacial charges, and ion valences, and their accumulative effects on the cation distribution. 
For all cases considered, the RBE2D results show excellent agreement with that of the HSMA and ICM-PPPM methods. 
The ion distribution shown in panel (c) also agrees with the result from a boundary element solver~\cite{wu2018asymmetric}, and reported in \cite{yuan2021particle}, figure 4(b). 
Additionally, we calculate the potential energy distribution $\mathscr{P}(U)$ for these systems and measure the discrepancy using the $W_2$ norm.
A convergence rate of $\mathcal{O}(1/P)$ is again observed (see Fig.~\ref{fig:energy2d} in the SM).

\subsection{Accuracy test case IV: dielectrically confined SPC/E Water}

We conclude the numerical tests with simulations of dielectrically confined SPC/E bulk water, and cross compare the RBE2D and ICM-PPPM methods. 
One can refer to Sec.~\ref{sec::waterhomo} for most of the system setups.
Here the dielectric mismatches are introduced, set as $\gamma_{\text{top}}=\gamma_{\text{bot}}=-0.5$. 
The parameters of the ICM-PPPM method were automatically chosen~\cite{kolafa1992cutoff} to achieve a relative error of $\Delta=10^{-4}$, and the real-space cutoff distance $r_c$ was set to $12\mathring{A}$, which was selected to optimize the performance of the ICM-PPPM method. 

\begin{figure}[ht]
	\centering
	\includegraphics[width=0.95\linewidth]{./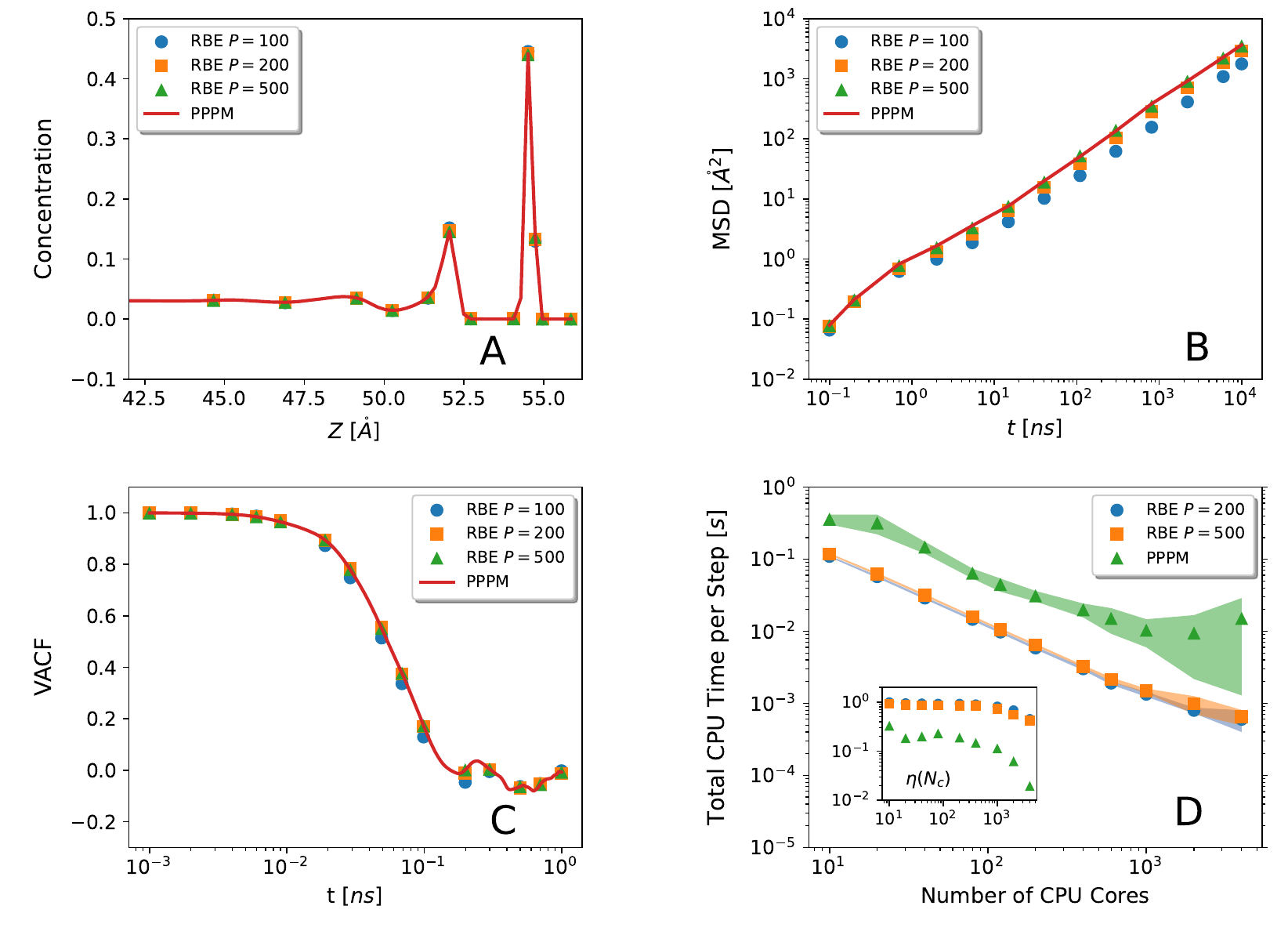}
	\caption{Distribution of oxygen atom in SPC/E bulk water system (a), the MSD (b), the VACF (c), and comparison of time performance between RBE2D (with different $P$) and ICM-PPPM (d) are studied. The inset in (d) shows corresponding strong parallel efficiency $\eta(N_{\text{cpu}})$ defined as Eq.~\eqref{eq::etau}. The results derived from the RBE2D is almost identical to those from the ICM-PPPM. The RBE2D-based MD has about $1.5$ magnitude faster than the ICM-PPPM-based MD.}
	\label{fig:den2}
\end{figure}

We first examine the concentration of oxygen along $z$-axis for a system comprising $53367$ atoms, the results are displayed in Fig.~\ref{fig:den2}(a). 
We again find an excellent agreement between the RBE2D and the ICM-PPPM when $P\geq 200$, whereas some small discrepancy is observed if one chooses $P=100$. 
The MSD and VACF are also calculated and shown in Figs.~\ref{fig:den2}(b-c).
The results clearly indicate that for dynamic properties of the water molecules (such as MSD and VACF), the results obtained by RBE2D is consistent to that of the ICM-PPPM cross multiple time scales ($fs$ to $ns$). 
Finally, the CPU time and scalability data of both the RBE2D and ICM-PPPM methods are documented in Fig.~\ref{fig:den2}(d), based on simulating a water system containing $139968$ atoms. 
It is observed that, the RBE2D is more than 10 times faster than the ICM-PPPM method. 
Furthermore,
the strong scaling is shown in the inset of Fig.~\ref{fig:den2}(d), which indicates 
that RBE2D can remain a strong scaling of $60\%$ for up to $4000$ CPU cores, while that of ICM-PPPM drops to only about $5\%$, again demonstrating the strong scalability of RBE2D method.

\section{Conclusion}\label{sec::conclusion}
In summary, we have developed a novel RBE2D method to efficiently simulate a collection of charges in a quasi-2D geometry, with the extension to scnarios with dielectric confinement.
Our method adopts an importance sampling strategy in $\V k$-space and scales optimally as $\mathcal O(N)$ and has reduced variations.
The accuracy, efficiency and scalability of our method are demonstrated via various numerical tests. 
Comparing with existing method, an excellent agreement in the simulation results is observed, but with a significant reduction in the computational cost and strong scalability thanks to the stochastic sampling nature of our method. 
Thus, our method can enable highly efficient and accurate simulations for large-scale simulations of charged systems under quasi-2D confinement.

When the confined interfaces are not of planar shape, we anticipate that a boundary
integral formulation can be incorporated -- the polarization effect can be approximated as the field due to induced surface charges, then in each field evaluations, RBE2D may be applied to reduce the computational cost, so that it will be capable of efficiently and accurately simulate charged particles confined by structured surfaces~\cite{wu2018asymmetric}.

In addition, we plan to apply the RBE2D method to more complex systems, such as the membrane-protein systems and the battery-electrolyte systems, where the confinement effects can play a crucial role in the system dynamics.


\appendix

\begin{figure}[!ht]
	\begin{center}
		\includegraphics[width=0.8\textwidth]{./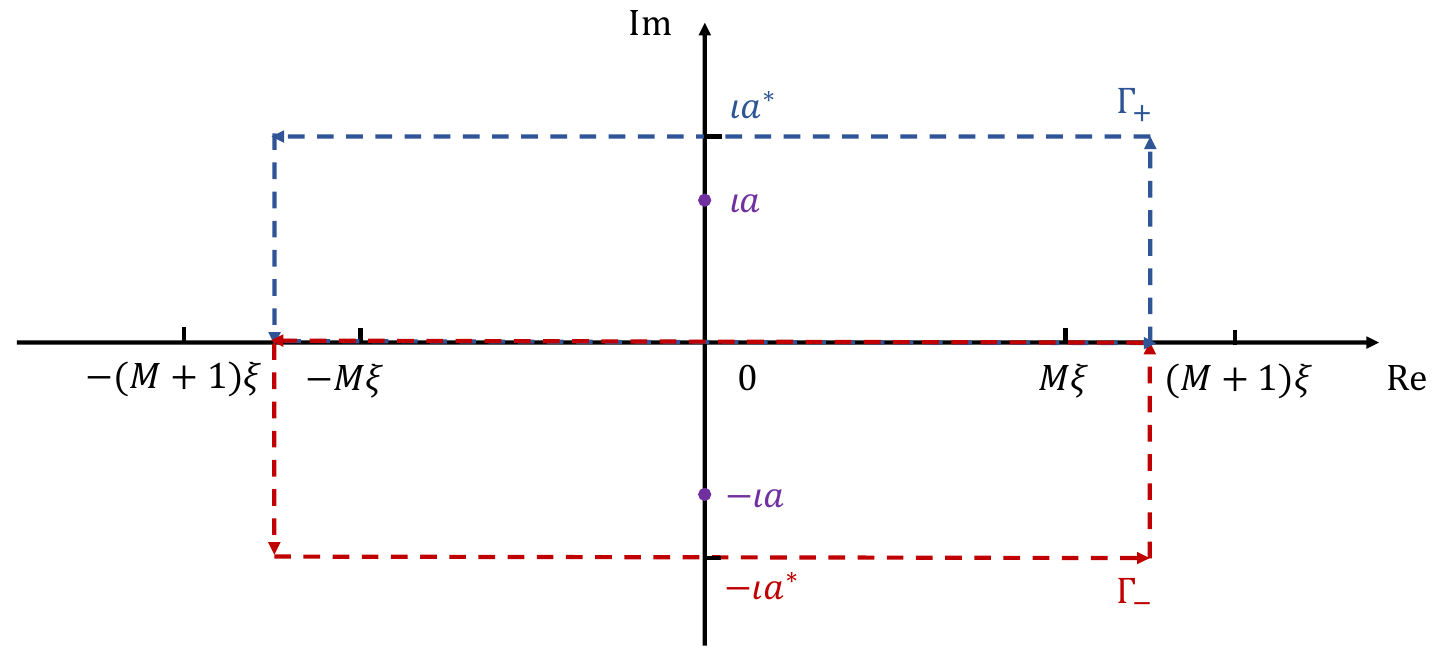}
		\caption{Integration contours in the error estimation of trapezoidal rule.}
		\label{fig:Trapezoidal}
	\end{center} 
\end{figure}

\section{Quadrature error of the trapezoidal rule}\label{app::trapezoidal}
We employ the method of contour integrals~\cite{Donaldson1972SINUA,trefethen2014Rev} to derive a precise estimation for the error in trapezoidal rule when discretizing Eqs.~\eqref{eq:ewald2d-intform1} and \eqref{eq::J02}. Consider the integral
\begin{equation}\label{eq::A.1}
	I=\int_{-\infty}^{\infty }\frac{e^{-(a^2+t^2)}}{a^2+t^2}e^{\i b t}dt,
\end{equation}
where $a\geq 0$ and $b\in\mathbb{R}$. We approximate $I$ using a $(2M+1)$-point trapezoidal rule:
\begin{equation}
	I_{M,\xi}=\xi\sum_{j=-M}^{M}\frac{e^{-a^2-(j\xi)^2}}{a^2+(j\xi)^2}e^{\i b j\xi},
\end{equation}
where $\xi>0$ is the step size, and we define the remainder as $E_{M,\xi}:=I-I_{M,\xi}$. Note that the integrand of $I$ has two simple poles at $t_{\pm}=\pm \i a$. Let $\Gamma_{\pm}$ be two positively/negatively oriented rectangular contours with vertices $(M+1/2)\xi\pm\i a^*$, $-(M+1/2)\xi\pm\i a^*$, $(M+1/2)\xi$, and $-(M+1/2)\xi$ (see Fig.~\ref{fig:Trapezoidal}). We enforce $a^*>a$ so that $\Gamma_{\pm}$ encloses both the interval $[-M \xi,M \xi]$ and the pole $t_{\pm}$. By following the approach given in~\cite{Donaldson1972SINUA} and applying Cauchy's theorem, we can derive an estimate for $E_{M,\xi}$ in the limit $M\rightarrow \infty$:
\begin{equation}
	E_{M,\xi}=\int_{\Gamma_++\Gamma_-}\frac{e^{-(a^2+t^2)+\i b t}}{a^2+t^2}\varphi(t)dt-2\pi\i \text{Res}\left[\frac{e^{-(a^2+t^2)+\i b t}}{a^2+t^2}\varphi(t),\pm \i a\right],
\end{equation}
where
\begin{align}
	\varphi(t)=\begin{cases}
		\dfrac{1}{1-e^{2\pi\i t/\xi}},\,\quad &\text{Im}(t)<0,\\[1em]
		-\dfrac{1}{1-e^{-2\pi\i t/\xi}},\,\quad&\text{Im}(t)>0,
	\end{cases}
\end{align}
is related to the characteristic function of the trapezoidal rule, and $\text{Res}[f(t),t_0]$ denotes the residue of a function $f$ at a pole $t_0$. Since the contributions from the vertical sides of $\Gamma_{\pm}$ vanish in the limit $M\rightarrow \infty$ and by using residue calculus, we have
\begin{equation}\label{eq::A.5}
	E_{M,\xi}=\left(\int_{-\infty+\i a^*}^{\infty+\i a^*}-\int_{-\infty-\i a^*}^{\infty-\i a^*}\right)\frac{e^{-(a^2+t^2)+\i b t}}{a^2+t^2}\varphi(t)dt+\frac{\pi}{a}\frac{e^{-ab}+e^{ab}}{1-e^{2\pi a/\xi}}.
\end{equation}
In Eq.~\eqref{eq::A.5}, the last term can be considered as the residue correction of the rule, while the remainder integral along $\text{Im}(t)=a^*$ can be estimated as
\begin{equation}
	\begin{split}
		\left|\int_{-\infty+\i a^*}^{\infty+\i a^*}\frac{e^{-(a^2+t^2)+\i b t}}{a^2+t^2}\varphi(t)dt\right|&\leq e^{(a^*)^2-a^2-a^*b-2\pi a^*/\xi}\int_{-\infty}^{\infty}\frac{|a^2+(t+\i a^*)^2|^{-1}e^{-t^2}}{\left|1-e^{2\pi\i t/\xi-2\pi a^*/\xi}\right|}dt\\
		&\leq\frac{\sqrt{\pi}e^{(a^*)^2-a^2-a^*b-2\pi a^*/\xi}}{(a^*)^2-a^2}.
	\end{split}
\end{equation}

To obtain a closed formula, it is necessary to determine the extremum of the exponent under the condition $a^*>a>0$. Since the range of $a$ is not specified, we can safely choose $a^*=\pi/\xi+b/2$ if $\pi/\xi+b/2>a$, and $a^*=\sqrt{a^2+1}$ otherwise. This choice ensures a decay rate of at least $\sim\mathcal{O}(e^{-\text{sign}(\pi/\xi+b/2)|\pi/\xi+b/2|^2})$, where $\text{sign}(t)=1$ if $t> 0$, $\text{sign}(t)=0$ if $t=0$, and $\text{sign}(t)=-1$ otherwise. By following a similar procedure, we can derive that the integral along $\text{Im}(t)=-a^*$ in Eq.~\eqref{eq::A.5} decays with order $\mathcal{O}(e^{-\text{sign}(\pi/\xi-b/2)|\pi/\xi-b/2|^2})$. Consequently, we can conclude that
\begin{equation}
	E_{M,\xi}=\frac{\pi}{a}\frac{e^{-ab}+e^{ab}}{1-e^{2\pi a/\xi}}+E_{\text{err}},
\end{equation}
where the remainder error term can be estimated as 
\begin{equation}\label{eq::A.8}
	|E_{\text{err}}|\sim \mathcal{O}(e^{-\text{sign}(\pi/\xi-|b|/2)\left|\pi/\xi-|b|/2\right|^2}).
\end{equation}


\bibliographystyle{siamplain}
\bibliography{new}
\end{document}